\documentclass[10pt]{article}

\usepackage{amssymb}
\usepackage{amsfonts}
\usepackage{amsmath}
\usepackage{amssymb}
\usepackage{graphicx}
\usepackage{caption}
\usepackage{float}
\usepackage[utf8]{inputenc}
\usepackage[english]{babel}
\usepackage{amsthm}

\newtheorem{theorem}{Theorem}
\newtheorem{lemma}[theorem]{Lemma}

\newtheorem{remark}{Remark}
\newtheorem{definition}{Definition}
\newtheorem{proposition}{Proposition}

\begin{document}
			
		\title{Safe 3-coloring of graphs}
		

		\author{Tanja Vojkovi\'c (1) \footnote{Corresponding author: tanja@pmfst.hr}, Damir Vuki\v{c}evi\'c (1)}

			
		\maketitle
		
		\noindent (1) Department of Mathematics, Faculty of Science, Split, Croatia\\

		\begin{abstract}
			The applications of graph coloring are diverse and many so lots of new types of coloring are being proposed and explored. Here we define a safe $k$-coloring, motivated by the application of coloring to secret sharing. Secret sharing is a way of securing a secret from a number of attackers by dividing it into parts and then distributing those parts to some persons, represented here by graph vertices. Parts of the secret are represented by colors which are then assigned to the vertices under certain conditions, making a coloring safe if a predetermined number of attackers cannot read the whole secret, nor disable the rest of the group from doing so. We observe a fixed number of colors, namely 3, and analyze what kind of graphs have a safe $3$-coloring.
		
		\end{abstract}
	
	Keywords:
		graph theory, graph coloring, secret sharing
		
		05C82, 05C15, 68R10, 94A62 

\section{Introduction and motivation}
\label{}

Graph colorings are a well known subject in graph theory. From the early days and the Four Color Theorem to many applications in scheduling, frequency allocation and timetabling \cite{appel,parlados,marx}. Simply put, a graph coloring is a function which assigns a color to every vertex or edge of the graph, hence vertex colorings and edge colorings. A coloring is proper if two adjacent vertices (edges) are not assigned the same color. Throughout the years, many different variations of coloring have been presented and studied, with many different conditions, for instance rainbow and anti-rainbow colorings of planar graphs, star colorings, list colorings, or multicolorings where a set of colors is assigned instead of a single color \cite{borodin,czap,fertin,halldorson}. In these colorings, different problems have been presented. Mostly the goal is to determine a minimal number of colors to color the graph properly or respecting some special conditions, but other goals have also been explored, like analyzing families of graphs that are colorable in a specific way, or developing efficient algorithms for specific coloring \cite{bollobas2,borodin2,blum}. In this paper we will present a variation of graph vertex coloring, motivated by the problem of securing a secret. We name it safe coloring. The idea is that some secret code or message is not safe enough if kept in one place, so it is divided into pieces and those pieces are distributed to the actors of some group. This is a well known method of secret sharing in cryptography \cite{shamir}. Usually the assumption is that some of the actors are corrupted, they are "the attackers", which behave in a certain way to steal the secret of prevent the rest of the group from reading it. In our considerations, a group is represented by a graph, and each piece of the secret corresponds to one color which is then distributed to the vertices. There have been some applications of graph coloring in secret sharing and network security, some of them are given in the following references \cite{desmedt,pal}.\\
In our previous paper, Multicoloring of Graphs to Secure a Secret \cite{nas}, we also started with a problem of dividing a secret into parts and distributing those parts to graph vertices. However, there we assumed the behavior of attackers in such a way that in order to secure a secret we observed multicolorings instead of colorings. There we defined a new kind of multicoloring, a highly $a$-resistant vertex $k$-multicoloring, and we analyzed minimal number of colors for such a coloring to exist, for different number of attackers, $a$. \\
Here our approach is different. The motivation of securing a secret against a number of corrupted vertices (the attackers) yields conditions on the coloring which prompt us to define a safe vertex coloring. In this paper we will restrict our observations to a fixed number of colors, namely $3$, and analyze the family of graphs that have a safe coloring with $3$ colors. 
The conditions for safe coloring follow from the assumption that the secret is safe if the group of attackers didn't manage to read the whole secret, i.e. collect all the pieces, and further, that they didn't disable the rest of the group from reading the secret. We assume that the attackers leave the group at some point (the attacker vertices are removed from graph), and the group can still read the secret if there is a component of the remaining graph that has all the pieces.
The main part of the paper consists of three sections. First we formulate the problem in a mathematical way, define safe coloring and present some additional conditions and restrictions under which we proceed. In the section Main results we describe and prove which families of graphs have a safe $3$-coloring, and in the section Additional results we give some remarks about time complexity of algorithms  which check if a given graph is safely $3$-colorable and a few minor observations.

\section{The definition of safe coloring}

Throughout the paper we will use standard definitions and notation of graph theory \cite{bollobas}.  
Graph $k$-coloring is a function $\phi :V(G)\rightarrow \{1,2,...,k\}$ which colors every vertex of a graph in one of $k$ colors. The coloring is proper if no adjacent vertices receive the same color. With $G\backslash A$, where $A\subset V(G)$, we denote a graph obtainted from graph $G$ by removing all vertices from $A$ and their incident edges. Now we formally introduce a concept of safe $k$-coloring.

\begin{definition}
	An \textbf{$a$-safe $k$-coloring} is a function $\phi :V(G)\rightarrow \{1,2,...,k\}$ such that for each subset $A\subset V(G)$, where $|A|=a$ it holds
	\begin{enumerate}
		\item 	${\displaystyle\bigcup\limits_{u\in A}}\phi(u)\neq\{1,...,k\}$;
		\item There is a component $H$ of graph $G\backslash A$ such that
		$${\displaystyle\bigcup\limits_{u\in V(H)}}\phi(u)=\{1,...,k\}\text{.}$$
		If some $a$-safe $k$-coloring exist for graph $G$ we say that $G$ is \textbf{$a$-safely $k$-colorable}.
	\end{enumerate}
\end{definition}

From condition $1.$ of the definition, it is easy to see that for a graph to be $a$-safely $k$-colorable it must hold $a\leq k-1$.
When $a=k-1$ we will call an $a$-safe $k$-coloring simply a \textbf{safe $k$-coloring}, and we will observe safe $k$-colorings in this paper.
Note that safe $k$-coloring doesn't need to be proper.

Our goal is to answer the question: What are the graphs that allow a safe $k$-coloring? In this paper we will restrict our efforts to $k=3$ and determine and prove which graphs admit a safe $3$-coloring.
We will observe only graphs with minimal degree at least $3$, motivated by the definition of safe coloring. Namely, if we demand that a component with all colors must exist in a graph with some vertices removed, then it is reasonable to assume a lower bound for minimal degree, as to make the number of "small" components in graph $G\backslash A$ as little as possible.\\

First, let us make an observation that a graph that admits a safe $3$-coloring must have at least $9$ vertices. In contrary, if it has at most $8$ vertices, at least one of the colors will appear at most $2$ times and then with the choice of those vertices in subset $A$ the defining conditions don't hold.

We will prove that all graphs $G$, with $|V(G)|\geq 9$ and $\delta (G)\geq 3$ have a safe $3$-coloring, with two
exceptions, a double windmill with adjacent centers and a double windmill with non-adjacent centers. Let us define those graphs.

\begin{definition}
	A \textbf{double windmill with adjacent centers}, $\overline{DW_{l}}$, $l\geq 1$ is a graph
	which consists of $l$ graphs $K_{2}$ and two central vertices which are
	adjacent to all the vertices from all $K_{2}$ graphs, and to each other. If
	central vertices are not adjacent we call it a \textbf{double windmill with
	non-adjacent centers} and denote it by $DW_{l}$. (Figure \ref{fig:DW}.)
\end{definition}

	\begin{figure}[h]
	\centering\includegraphics[scale=0.7]{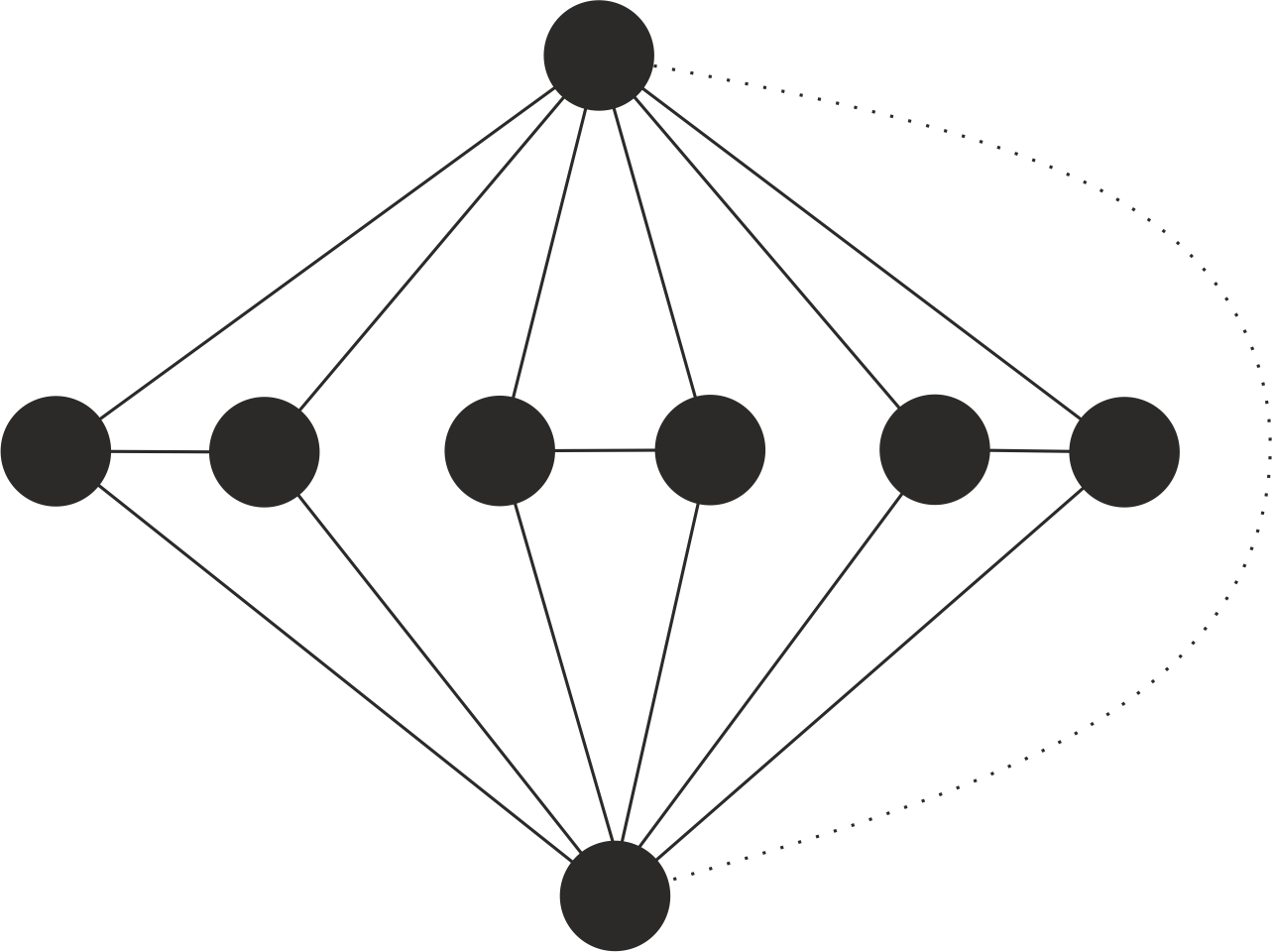}
	\caption{A double windmill, $l=3$}
	\label{fig:DW}
\end{figure}

By \textbf{double windmill} we will mean any of two kinds of double windmills defined.
It is easy to see that both a double windmill with adjacent centers and a double windmill with non-adjacent centers don't have a safe $3$-coloring, for if we choose $A=\{c_{1},c_{2}\}$, where $c_{1}$ and $c_{2}$ are the two centers, all the components in $G\backslash A$ will have $2$ vertices and therefore none of them has all $3$ colors.

A structure we will widely use in our considerations is that of a \textbf{connected
triplet}. It means $3$ vertices connected by a path, the third, triangle
forming edge, may or may not exist. We will denote a connected triplet by $
T_{3}$. By \textbf{independent triples} we assume the triplets that share no vertices. 
For cycles of length $3$, $4$, or $5$, we will use standard
denotation $C_{3}$, $C_{4}$ and $C_{5}$, and for a path of length $n$, $
P_{n} $. Also, whenever possible we will use $\delta $ instead of $\delta
(G) $ and $n$ instead of $|V(G)|$. 

\section{Main results}
Now let us begin with the claims.

\begin{proposition}
	\label{prop1}A graph $G$ which contains three independent triplets is safely $3$-colorable.
\end{proposition}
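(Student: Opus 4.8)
The plan is to write down an explicit coloring and check the two conditions of the definition directly. First I would observe that condition~1 is automatic in our setting: a safe $3$-coloring is an $a$-safe $3$-coloring with $a=k-1=2$, so every set $A$ with $|A|=2$ carries at most two colors, and $\bigcup_{u\in A}\phi(u)$ can never equal $\{1,2,3\}$. Hence the whole content of the proposition lies in arranging the colors so that condition~2 holds: after deleting any two vertices, some component of what remains still displays all three colors.

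Next I would fix three pairwise vertex-disjoint connected triplets $T^{(1)},T^{(2)},T^{(3)}$ in $G$, which exist by hypothesis (and, incidentally, force $|V(G)|\ge 9$). On each $T^{(i)}$ I would assign the three colors $1,2,3$, one per vertex; this is possible because a connected triplet has exactly three vertices. Every remaining vertex of $G$ I would color arbitrarily, say with color $1$. Call this coloring $\phi$. The key feature is that each $T^{(i)}$ induces a connected subgraph of $G$ — it is a path on its three vertices, possibly together with the triangle-closing edge — on which $\phi$ uses all three colors.

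To verify condition~2, take an arbitrary $A\subseteq V(G)$ with $|A|=2$. Since the three triplets are pairwise vertex-disjoint, $A$ can meet at most two of them, so there is an index $j$ with $V(T^{(j)})\cap A=\emptyset$. Then $T^{(j)}$ survives intact in $G\backslash A$ and is still connected there, because its connecting edges lie among its own three vertices and deleting vertices outside it cannot disconnect it; hence $T^{(j)}$ is contained in a single component $H$ of $G\backslash A$. That component then satisfies $\bigcup_{u\in V(H)}\phi(u)\supseteq\{1,2,3\}$, which is exactly condition~2. Since $A$ was arbitrary, $\phi$ is a safe $3$-coloring and $G$ is safely $3$-colorable.

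I do not expect a genuine obstacle here; the one point requiring care is the precise reading of ``connected triplet'', namely that its three vertices are joined by edges among themselves, so that an untouched triplet cannot be disconnected by removing two vertices elsewhere in the graph. Everything else is a short direct verification.
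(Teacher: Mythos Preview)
Your proof is correct and follows the same approach as the paper's: color each of the three independent triplets with colors $1,2,3$ and observe that removing any two vertices leaves at least one triplet untouched, hence a connected piece carrying all three colors. You have simply spelled out more of the details (condition~1, coloring the remaining vertices, why an untouched triplet stays in one component) than the paper, which dispatches the argument in a single sentence.
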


\begin{proof}
	The proof is quite obvious, since it is enough to find a coloring which assigns
	colors $1$, $2$ and $3$ to three different vertices in each of three
	triplets, and by removing any two vertices at least one triplet remains
	intact.
\end{proof}

\begin{lemma}
	\label{lm1}A graph $G$ with $n\geq 9$ vertices and $\delta \geq 3$ which contains a
	double windmill as a subgraph is either safely $3$-colorable or a double windmill.
\end{lemma}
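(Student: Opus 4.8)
The plan is to reduce the problem to Proposition~\ref{prop1} by producing three independent connected triplets, and to argue that the only obstruction is $G$ being a double windmill. Fix a double windmill subgraph $W\subseteq G$ with the largest possible number of blades $l$; write $c_{1},c_{2}$ for its centers and $B_{i}=\{x_{i},y_{i}\}$, with $x_{i}y_{i}\in E(G)$, for its blades, so $|V(W)|=2l+2$. The starting point is that for any two distinct indices $p,q$ the sets $\{c_{1}\}\cup B_{p}$ and $\{c_{2}\}\cup B_{q}$ are vertex-disjoint connected triplets (in fact triangles), so it suffices to find a third connected triplet disjoint from six vertices of the form $\{c_{1},c_{2}\}\cup B_{p}\cup B_{q}$, with $p,q$ at our disposal. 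I would also record the extra flexibility that a single blade $B_{k}=\{x_{k},y_{k}\}$ can have its two vertices split between two triplets, since $x_{k}-c_{j}-y_{k}$ is itself a connected triplet for each center $c_{j}$; this is what keeps the small-$l$ cases tractable.

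First I would handle the case $V(G)=V(W)$. Then $n=2l+2\geq 9$ forces $l\geq 4$, and every edge of $G$ not already in $W$ is either $c_{1}c_{2}$ (in the non-adjacent case) or an edge joining vertices of two distinct blades. If there are no such extra edges, $G$ is $DW_{l}$ or $\overline{DW_{l}}$; if the only extra edge is $c_{1}c_{2}$, then $G=\overline{DW_{l}}$. Otherwise there is an edge $uv$ with $u\in B_{i}$, $v\in B_{j}$, $i\neq j$; since $l\geq 4$ we may choose $p,q\notin\{i,j\}$, take the two center-triplets on $B_{p},B_{q}$, and use the path on $B_{i}\cup\{v\}$ (through the edge $uv$ and the blade edge at $u$) as the third triplet. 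Proposition~\ref{prop1} then gives a safe $3$-coloring.

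The main work is the case where some vertex $v\notin V(W)$; then $\delta\geq 3$ applies to $v$, and the vertex count $n\geq 9$ gives $n\geq 2l+3$, hence $l\geq 3$, so at least three blades are available. If $v$ has two neighbours $a,b$ outside $\{c_{1},c_{2}\}$, then $\{a,v,b\}$ is a connected triplet, and using the blade-splitting trick one checks that the remaining vertices (the centers, the partners of $a,b$ in their blades, and at least one full spare blade) still contain two further independent triplets; this disposes of the subcase. If instead every vertex outside $W$ has at most one neighbour off $\{c_{1},c_{2}\}$, then each such vertex is adjacent to both $c_{1}$ and $c_{2}$ and has exactly one further neighbour, and maximality of $l$ forbids two such vertices being adjacent; a careful local analysis of where these ``extra'' vertices and any inter-blade edges sit then either exhibits the third triplet or forces all non-center vertices to be paired into blades, i.e. forces $G$ to be a double windmill, contradicting $v\notin V(W)$ unless that windmill is all of $G$. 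Collecting the cases proves the lemma.

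The step I expect to be the real obstacle is this last subcase: ruling out, or otherwise resolving, the ``tight'' configurations in which almost no extra edges or vertices lie outside the windmill structure, together with the bookkeeping needed to guarantee that the three chosen triplets are simultaneously vertex-disjoint when $l$ is as small as $3$. Verifying that maximality of $l$ is strong enough to pin down $G$ in those extremal situations — rather than leaving some non-windmill graph that admits no safe coloring — is where the argument will need to be most delicate.
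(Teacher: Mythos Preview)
Your overall strategy --- take a blade-maximal double windmill $W$, then manufacture three independent connected triplets using two ``center $+$ blade'' triangles plus one extra triplet --- is exactly the paper's approach; the paper just executes it more tersely (two cases on whether the outside vertex $u$ sees both centers, with maximality of $W$ used the same way you use it). Your treatment of the case $V(G)=V(W)$ is in fact more careful than the paper's, since you notice that inter-blade edges must be dealt with before declaring $G$ a double windmill.

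However, there is a concrete logical slip in your main case. From ``there exists $v\notin V(W)$'' you write ``the vertex count $n\geq 9$ gives $n\geq 2l+3$, hence $l\geq 3$''. This does not follow: $n\geq 2l+3$ is a \emph{lower bound on $n$}, not on $l$; nothing prevents $l=1$ or $l=2$ with many vertices outside $W$. Your subsequent argument genuinely uses $l\geq 3$ (you repeatedly invoke ``at least one full spare blade'' after spending two blades on the center triangles, and your blade-splitting trick still needs two centers plus enough blade vertices left over), so the error propagates and the small-$l$ configurations are not covered. The paper's proof is also silent here --- it simply asserts ``the triplets are easily found'' --- but your write-up turns a tacit assumption into an explicit false deduction. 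To repair this you should either (i) argue separately that the \emph{maximal} double windmill in a graph with $\delta\geq 3$ containing some double windmill must already have enough blades, or (ii) handle $l\in\{1,2\}$ directly by using the at least $9-(2l+2)\geq 3$ outside vertices (each of degree $\geq 3$) to build the missing triplets.

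A second, softer issue: in your final subcase (every outside vertex adjacent to both centers and to exactly one further vertex) you defer to ``a careful local analysis''. The paper's corresponding step is short and worth imitating: if that third neighbour lies in $W\setminus\{c_1,c_2\}$ one immediately gets three triplets, and if it is a new vertex $v'$ then either $v'$ has a neighbour outside $\{c_1,c_2,v\}$ (triplets again) or $v'$ is adjacent only to $c_1,c_2,v$, whence $\{v,v'\}$ is a new blade and $W$ was not maximal. Writing this out explicitly would close the gap you flag in your last paragraph.
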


\begin{proof}
	Let $G$ be a graph with $n\geq 9$ and $\delta \geq 3$ which contains a
	double windmill as a subgraph. Let $D$ be the largest double windmill in $G$. 
	If there are no vertices in $G\backslash D$, the claim is proven, so let
	us assume there exist a vertex $u\in V(G)\backslash V(D)$. It holds $%
	d(u)\geq 3$, so let us observe neighbors of $u$. We distinguish two cases.
	
	1) $u$ is adjecent to two central vertices of $D$.
	
	Than $u$ must have at least one more neighbor. If that neighbor is any
	vertex in $D$ different from the central vertices than $G$ contains three
	independant triplets, and is therefore safely $3$-colorable. And if the third neighbor
	of $u$ is a vertex $v\in V(G)\backslash V(D)$ than we observe neighbors of $%
	v $. If $v$ has any more neighbors not contained in $D$, the three triplets
	are again easily seen, and if $v$ is adjacent only to $u$ and the two
	central vertics of $D$ than we have obtained a larger windmill which is a
	contradiction.
	
	2) $u$ is not adjecent to both central vertices of $D$. Than $u$ must have
	at least two more neighbors. In all the possible cases, of those neighbors
	be in $D$ or not, the triplets are easily found.
\end{proof}

Now we present the central claim of the paper, which will be proven through several claims.
\begin{theorem}
	Graph $G$ with $\delta \geq 3$ is safely $3$-colorable if at least one of the
	following stands:
	
	i) $G$ has at least three components;
	
	ii) $G$ has two components with at least $6$ vertices each;
	
	iii) $G$ has at least one component with at least $9$ vertices which is
	different from a double windmill.

\end{theorem}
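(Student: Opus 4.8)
The plan is to reduce all three cases to Proposition~\ref{prop1} and Lemma~\ref{lm1} by showing that, under any of the hypotheses (i)--(iii), the graph $G$ either contains three independent triplets, or contains a double windmill as a subgraph (and then by Lemma~\ref{lm1} is either safely $3$-colorable or is itself a double windmill, which case (iii) excludes and which cases (i),(ii) rule out for cardinality reasons). So the key structural lemma to prove is: \emph{a graph $H$ with $\delta(H)\ge 3$ and $|V(H)|\ge 9$ that is not a double windmill contains three independent triplets}. Note that a single vertex together with two of its neighbors already forms a connected triplet (the path $P_2$ suffices), so "triplet" is a very weak local structure; the whole difficulty is the \emph{independence} (vertex-disjointness) of the three copies.

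For case (i), if $G$ has at least three components, then since $\delta\ge3$ each component has at least $4$ vertices, hence contains a triplet (pick a vertex and two of its neighbors); three components give three vertex-disjoint triplets and Proposition~\ref{prop1} applies. For case (ii), a component on at least $6$ vertices with $\delta\ge3$ contains \emph{two} independent triplets: take any vertex $v$, its neighborhood gives one triplet on $\le 4$ vertices; among the remaining $\ge 2$ vertices, together with the rest of the component one locates a second disjoint triplet — this needs a short argument using $\delta\ge3$ to guarantee that after deleting $4$ vertices some vertex still has two neighbors available, which I would do by a counting/degree argument or by case analysis on small components. Two such components then yield four independent triplets, more than enough. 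Case (iii) is where the real work lies and where Lemma~\ref{lm1} and the exclusion of double windmills enter: I would take the component $K$ with $|V(K)|\ge 9$, and argue it contains three independent triplets \emph{unless} it has very restricted structure, in which case I show it must be a double windmill.

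The main obstacle is the case-(iii) structural dichotomy. The natural approach is a greedy extraction: repeatedly pick a low-degree vertex, pull out a triplet on $3$ or $4$ vertices, delete it, and continue; after two rounds we have deleted at most $8$ vertices, leaving at least one vertex, but the leftover graph need not have minimum degree $3$, so a third triplet is not automatic. I would therefore instead argue by contradiction: suppose $K$ has at most two independent triplets, i.e.\ there is a set $S$ of few vertices meeting every triplet. A vertex not in any triplet of $K\setminus S$ would have all its $\ge 3$ neighbors... — one shows such a "triplet-free" remainder forces almost all vertices to have their neighborhoods concentrated on a tiny set of "centers," and then a careful analysis of how $\ge 9$ vertices of degree $\ge 3$ can attach to one or two centers while avoiding three disjoint triplets pins the graph down to exactly $DW_l$ or $\overline{DW_l}$. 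Throughout, the bound $|V(K)|\ge 9$ is used to guarantee enough vertices to find the third triplet whenever the structure is \emph{not} windmill-like, and Lemma~\ref{lm1} is invoked precisely to absorb the windmill-subgraph case. Once the structural lemma is in hand, the theorem follows immediately by checking, in each of (i)--(iii), that the resulting double-windmill exception cannot actually occur (by vertex count in (i),(ii), and by hypothesis in (iii)).
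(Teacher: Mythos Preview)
Your high-level plan coincides with the paper's: cases (i) and (ii) are handled by directly extracting enough independent triplets, and case (iii) is reduced to the structural dichotomy ``connected, $n\ge 9$, $\delta\ge 3$, not a double windmill $\Rightarrow$ three independent triplets'' (together with Lemma~\ref{lm1} to absorb the windmill-subgraph situation). For (i) your argument is exactly the paper's; for (ii) the paper proves the statement you need---two independent triplets in any connected graph with $n\ge 6$ and $\delta\ge 3$---as a standalone lemma by a short case split on whether an initial triplet is a triangle or a path, which is what your ``counting/degree argument or case analysis on small components'' would have to become.

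The gap is case (iii). The structural dichotomy is where essentially all the content of the theorem lies, and your sketch does not prove it. Your hitting-set observation is a valid entry point---if no three independent triplets exist, then the six vertices of two maximal independent triplets meet every triplet, so every remaining vertex has at least two of its $\ge 3$ neighbours among those six---but the ``careful analysis of how $\ge 9$ vertices of degree $\ge 3$ can attach \ldots\ pins the graph down to exactly $DW_l$ or $\overline{DW_l}$'' that you gesture at is a lengthy case distinction, not a short counting argument. The paper carries this out as its main connected-case theorem: it first shows two independent triplets always exist, then runs seven separate claims according to whether each of those triplets spans a $C_5$, $C_4$, $C_3$, or just a bare $T_3$ (the combinations $C_5{+}T_3$, $C_4{+}C_3$, $C_3{+}C_3$, $C_4{+}C_4$, $C_4{+}T_3$, $C_3{+}T_3$, $T_3{+}T_3$), in each case either locating a third disjoint triplet directly, reducing to an earlier claim, or exhibiting a double-windmill subgraph so that Lemma~\ref{lm1} finishes. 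Your proposal correctly identifies the target and the overall architecture, but the substance of the proof---this seven-way case analysis---is missing, and nothing in your outline suggests a way to avoid it.
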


First, let us consider connected graphs and prove:

\begin{theorem}
	A connected graph $G$ with $n\geq 9$ vertices and $\delta \geq 3$ is safely $3$-colorable or it
	is a double windmill.
\end{theorem}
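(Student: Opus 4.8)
The strategy is to reduce to Proposition~\ref{prop1} by showing that a connected graph $G$ with $n\geq 9$ and $\delta\geq 3$ that is \emph{not} a double windmill must contain three independent triplets; Lemma~\ref{lm1} then lets us dispose of any $G$ that happens to contain a double windmill as a subgraph. So the heart of the matter is the following: if $G$ is connected, $n\geq 9$, $\delta\geq 3$, and $G$ contains no three independent triplets (equivalently, $G$ contains no double windmill and no three disjoint connected triplets), derive a contradiction. I would first record that a connected graph on $n\geq 9$ vertices with $\delta\geq 3$ is quite ``dense'' in a local sense: every vertex together with three of its neighbors spans either a triplet-like configuration or a star $K_{1,3}$, and a $K_{1,3}$ centered at $v$ with leaves $x,y,z$ is \emph{almost} a triplet — indeed any path of length $2$ among $x,y,z$ through another vertex gives one, and since each of $x,y,z$ also has degree $\geq 3$ such connections are hard to avoid. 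The plan is to build the three independent triplets greedily and show the process cannot get stuck before completion.

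**Key steps, in order.**
(1) Pick any vertex $v_1$; since $d(v_1)\geq 3$, the closed neighborhood $N[v_1]$ contains a connected triplet $T^{(1)}$ (a path $x{-}v_1{-}y$ always works). (2) Delete $V(T^{(1)})$ and argue that the remaining graph $G_1 := G\setminus V(T^{(1)})$ still has enough structure to host a second triplet: it has at least $n-3\geq 6$ vertices, and at most three vertices of $G_1$ can have had their degree dropped below $3$ (namely the $\leq 3$ vertices adjacent to all of $T^{(1)}$, or more carefully, vertices losing edges). Here I would be careful: removing $3$ vertices can in principle lower many degrees, so instead of tracking minimum degree I would track a single vertex: choose in $G_1$ a vertex $w$ that still has two neighbors inside $G_1$ — such $w$ exists because a graph on $\geq 6$ vertices with at most, say, $O(1)$ edges to the deleted set cannot be too sparse — and form $T^{(2)}$ from $w$ and two of its $G_1$-neighbors. (3) Repeat once more in $G_2 := G_1\setminus V(T^{(2)})$, which has $\geq n-6\geq 3$ vertices; find a triplet $T^{(3)}$ there. (4) If at any stage the process is blocked — i.e. the leftover graph has no vertex with two neighbors in it — then that leftover graph is a disjoint union of edges and isolated vertices, which forces $G$ itself to be very restricted (all long-range structure passes through the $\leq 6$ already-removed vertices), and I would show this can only happen when $G$ is a double windmill, whence Lemma~\ref{lm1} applies and we are done anyway. (5) Conclude by Proposition~\ref{prop1}.

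**Main obstacle.** The delicate point is step (4): controlling what happens when the greedy choice leaves a near-empty residual graph. Unlike arguments that preserve $\delta\geq 3$ globally, here removing three vertices at a time can devastate degrees, so the contradiction has to come from a more global structural analysis — essentially classifying the connected graphs with $\delta\geq 3$ in which \emph{every} triplet meets a fixed set of $\leq 6$ ``hub'' vertices. I expect this to funnel precisely into the two double windmill families (each hub being a center, each $K_2$ a blade), using $n\geq 9$ to rule out the small sporadic cases; this is also where the hypothesis $n\geq 9$ is genuinely needed, matching the remark just before the theorem that $8$ vertices is not enough. The rest of the proof is bookkeeping: counting lost edges, and checking that ``a vertex with two surviving neighbors'' persists as long as the residual graph has more than a couple of vertices and is not already exhibiting a windmill.
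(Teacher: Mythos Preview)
Your overall target --- show that every connected $G$ with $n\geq 9$, $\delta\geq 3$ that is not a double windmill contains three independent triplets, then invoke Proposition~\ref{prop1} --- is exactly what the paper ultimately establishes, so the reduction is sound. But the greedy scheme you propose does not work as stated, and the failure is precisely at the step you flag as the ``main obstacle''.

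\medskip

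\textbf{A concrete counterexample to step~(4).} Take $V(G)=\{1,\dots,9\}$, let $\{1,2,3\}$ span a triangle, let $\{4,\dots,9\}$ be an independent set, and join every vertex of $\{4,\dots,9\}$ to each of $1,2,3$. Then $G$ is connected, $n=9$, $\delta=3$, and $G$ is \emph{not} a double windmill (a double windmill has $2l+2$ vertices, never $9$). Moreover $G$ does contain three independent triplets, e.g.\ $4\text{--}1\text{--}5$, $6\text{--}2\text{--}7$, $8\text{--}3\text{--}9$. Yet if your greedy step~(1) picks $T^{(1)}=\{1,2,3\}$ (a perfectly legal connected triplet), then $G_1=G\setminus T^{(1)}$ is an independent set on six vertices, so step~(2) is already blocked: no vertex of $G_1$ has even one neighbor in $G_1$. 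Your step~(4) asserts that in this situation $G$ must be a double windmill, and that is simply false.

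The same example kills the informal justification you offer in step~(2) (``such $w$ exists because a graph on $\geq 6$ vertices with at most $O(1)$ edges to the deleted set cannot be too sparse''): the number of edges from $G_1$ to the deleted triplet is not $O(1)$ in any useful sense, since the three deleted vertices may have arbitrarily high degree.

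\medskip

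\textbf{What this means for the plan.} The problem is not that three disjoint triplets fail to exist, but that a careless first choice can hide them. Any rescue of your approach must either (a) choose $T^{(1)}$ and $T^{(2)}$ with foresight, or (b) when blocked, go back and reorganize the already-chosen triplets together with the residual vertices. Option~(b) is essentially what the paper does: it first argues that two independent triplets always exist (using a ``minimum degree-sum'' triplet to control the case analysis), and then, in Claims~1--7, runs a structural case split on the \emph{shape} of that pair ($C_5+T_3$, $C_4+C_3$, $C_3+C_3$, $C_4+C_4$, $C_4+T_3$, $C_3+T_3$, $T_3+T_3$), each time locating a third disjoint triplet or exhibiting a double-windmill subgraph (whence Lemma~\ref{lm1} finishes). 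The work is precisely in that case analysis; your proposal defers all of it to an unproved ``I expect this to funnel into the double windmill families'', and the example above shows that the funnel is not as narrow as you hope.
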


\begin{proof}
	Obviously, a connected graph $G$ with $n\geq 9$ and $\delta \geq 3$ contains
	at least one $T_{3}$. Let us prove that it contains at least two independent
	triplets. We have at least one triplet, hence we can denote with $T_{3}$ a triplet
	for which the sum of degrees of its vertices is minimal. We consider two
	cases, with some subcases.
	
	1) $T_{3}$ is not a triangle. Let us denote its vertices by $u$, $v$, $w$; $
	v $ being the middle one. This means that vertices $u$ and $w$ have at least
	two more neighbors not contained in $T_{3}$, and $v$ has at least one.
	
	1.1) If $u$ has $x_{1}$ and $x_{2}$ for neighbors, and $w$ has $y_{1}$ and $%
	y_{2}$ and all those neighbors are different vertices then the two triplets
	are $x_{1}ux_{2}$ and $y_{1}wy_{2}$.
	
	1.2.) If $u$ has neighbors $x_{1}$ and $x_{2}$ and $w$ has neighbors $x_{1}$
	and $y_{1}$ then the two triplets are $x_{1}ux_{2}$ and $vwy_{1}$.
	
	1.3.) If both $u$ and $w$ have neighbors $x_{1}$ and $x_{2}$ then we
	consider neighbors of $v$.
	
	1.3.1) If $x_{1}$ and $x_{2}$ are adjacent, we have a cycle $uvwx_{2}x_{1}$.
	Since $G$ is connected, there is a vertex $x_{3}$ adjacent to one of these
	vertices and we have two independent triplets.
	
	1.3.2) Suppose that $x_{1}$ and $x_{2}$ are not adjacent. If $v$ is also
	adjacent to $x_{1}$ and not $x_{2}$ then $x_{2}$ has at least one more
	neighbor, $y$. Now, $yx_{2}w$ and $ux_{1}v$ are two triplets. The same holds
	if $v$ is adjacent with $x_{2}$ and not $x_{1}$. If $v$ is adjacent to both $%
	x_{1}$ and $x_{2}$ then $x_{1}$ or $x_{2}$ must have at least one more
	neighbor, since $n\geq 9$, and $uvw$ is a triplet with minimal sum of vertex
	degrees. With that new neighbor, the triplets are formed as in previous.
	Third option is if $v$ is adjacent to neither $x_{1}$ nor $x_{2}$, but
	instead, a new neighbor, $y$, but in that case it is easily seen that the
	triplets for example are $yvw$, $x_{1}ux_{2}$.
	
	2) $T_{3}$ is a triangle. Let us again denote its vertices by $u$, $v$, $w$.
	Now each of $u$, $v$, $w$ has at least one more neighbor.
	
	2.1) If that is the same one, a vertex $x$, then we have a $K_{4}$ subgraph,
	and since $n\geq 9$, $x$ must have more neighbors. ($x$ has more or equal
	neighbors than $u$, $v$ and $w$ because of the vertex degree sum). Either $x$
	has one more neighbor, $y$, and then $y$ has more neighbors, or $x$ has two
	or more neighbors, but in both cases the existence of two independent
	triplets can be easily seen.
	
	2.2) If any two vertices of $u$, $v$, and $w$ are neighbors with the same
	vertex $x$, let us assume $u$ and $v$, and the third one, $w$, is neighbor
	with vertex $y\neq x$. Then $y$ has at least one more neighbor, $z$, in
	which case $zyw$ and $uvx$ are the triplets, or is adjacent with $x$ and $u$
	or $v$. In this case, again because of the minimal sum of degrees of $uvw$, $%
	x$ or $y$ must have at least one more neighbor, and then the triplets are
	easily seen.
	
	The proof of the Theorem is divided in several claims.
	
	\textbf{CLAIM 1}. A connected graph $G$ with $n\geq 9$ and $\delta \geq 3$
	which contains independent $C_{5}$ and $T_{3}$ is safely $3$-colorable.
	
	\textit{Proof of Claim 1.} In $C_{5}$ and $T_{3}$ there are $8$ vertices, so $G$
	contains at least another vertex, $u$. Let us denote the vertices in $T_{3}$
	by $x_{1}$, $x_{2}$ and $x_{3}$; $x_{2}$ being the middle one. (Figure \ref
	{fig:c5t3}.) 
	
		\begin{figure}[h]
		\centering\includegraphics[scale=0.4]{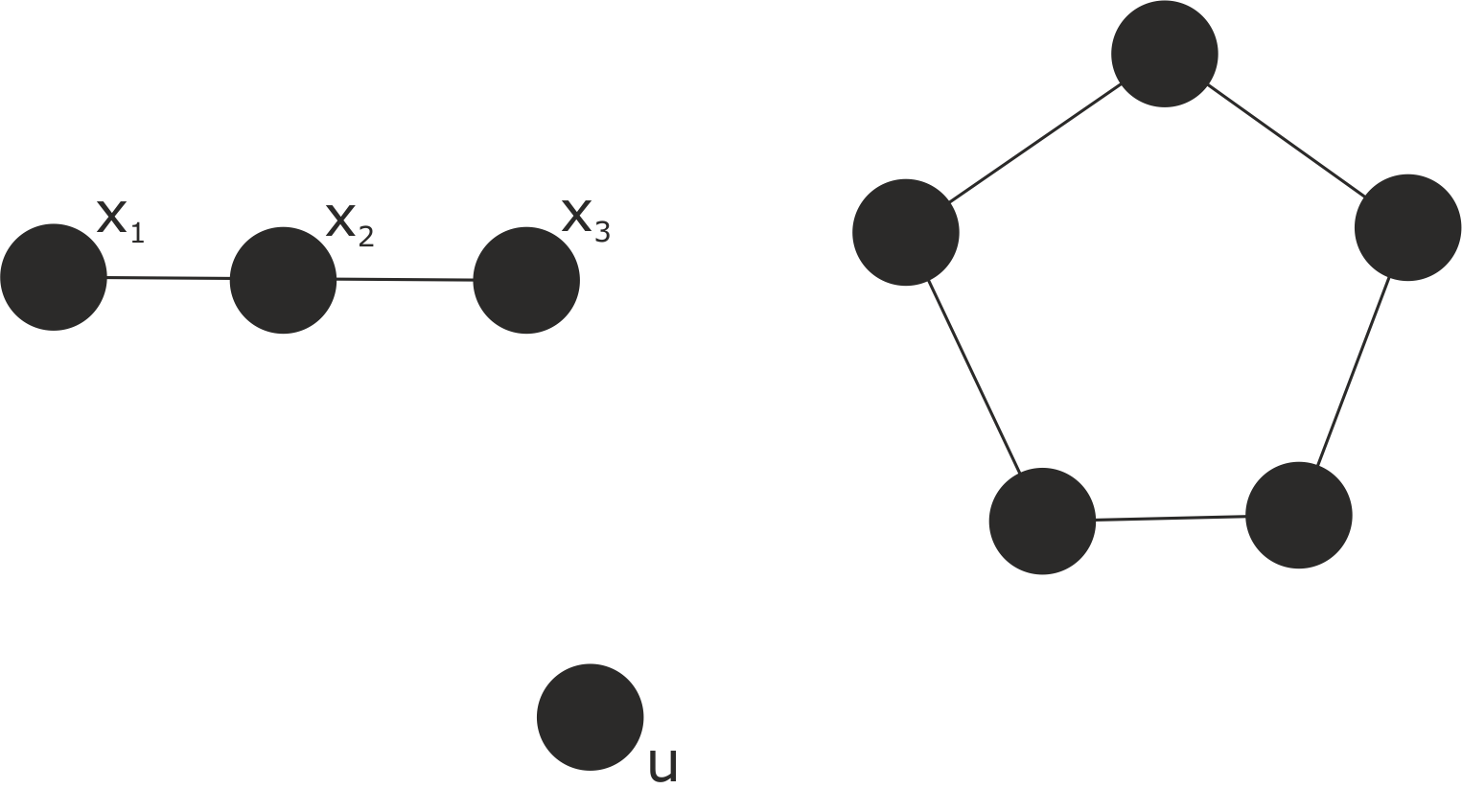}
		\caption{Independent $C_{5}$ and $T_{3}$}
		\label{fig:c5t3}
	\end{figure}

	If $u$ is adjacent with any vertex in $%
	C_{5}$, then there we have a path $P_{6}$, and with $T_{3}$ we have three
	independent triplets so $G$ is safely $3$-colorable by Proposition \ref{prop1}. If $u$ has
	two neighbors not contained in $T_{3}$ then obviously we have three
	triplets, so let us assume $u$ has two neighbors in $T_{3}$, and at least one of them must be
	the end one. So let $u$ be adjacent to $x_{1}$. Since $u$ is not adjacent to 
	$C_{5}$, and $G$ is a connected graph, $C_{5}$ must be adjacent to $T_{3}$ by
	some path. If there are additional vertices on that path, we will have three
	triplets, so let us assume $C_{5}$ is adjacent with $T_{3}$ by an edge. We
	consider three cases:
	
	1) $C_{5}$ is adjacent to $x_{1}$. Since $u$ is adjacent to $x_{1}$ and
	either $x_{2}$ or $x_{3}$, we have a triplet $ux_{2}x_{3}$, and $x_{1}$ with 
	$C_{5}$ forms a path $P_{6}$ and hence two additional triplets.
	
	2) $C_{5}$ is adjacent to $x_{3}$. In this case we have a triplet $%
	ux_{1}x_{2}$ and a path $P_{6}$.
	
	3) $C_{5}$ is adjacent to $x_{2}$. Now $C_{5}$ and $x_{2}$ form two
	triplets, and in addition to $x_{1}$, $u$ is adjacent either to $x_{3}$, so $%
	x_{1}ux_{3}$ is a third triplet, or $u$ is adjacent to $x_{2}$ and some \
	other vertex, $w$, not contained in $C_{5}$ nor $T_{3}$. But now $x_{1}uw$
	is the third triplet. $\square $
	
	\textbf{CLAIM\ 2}.\ A connected graph $G$ with $n\geq 9$ and $\delta \geq 3$
	which contains independent $C_{4}$ and $C_{3}$ is safely $3$-colorable.
	
	\textit{Proof of Claim 2.} Besides $C_{4}$ and $C_{3}$, $G$ must have at least two
	more vertices. Let us denote them by $u$ and $v$, and let us denote the
	vertices in $C_{4}$ by $x_{1},...,x_{4}$, and in $C_{3}$ by $y_{1}$, $y_{2}$, $%
	y_{3}$. (Figure \ref{fig:c4c3}.)
	
		\begin{figure}[h]
		\centering\includegraphics[scale=0.4]{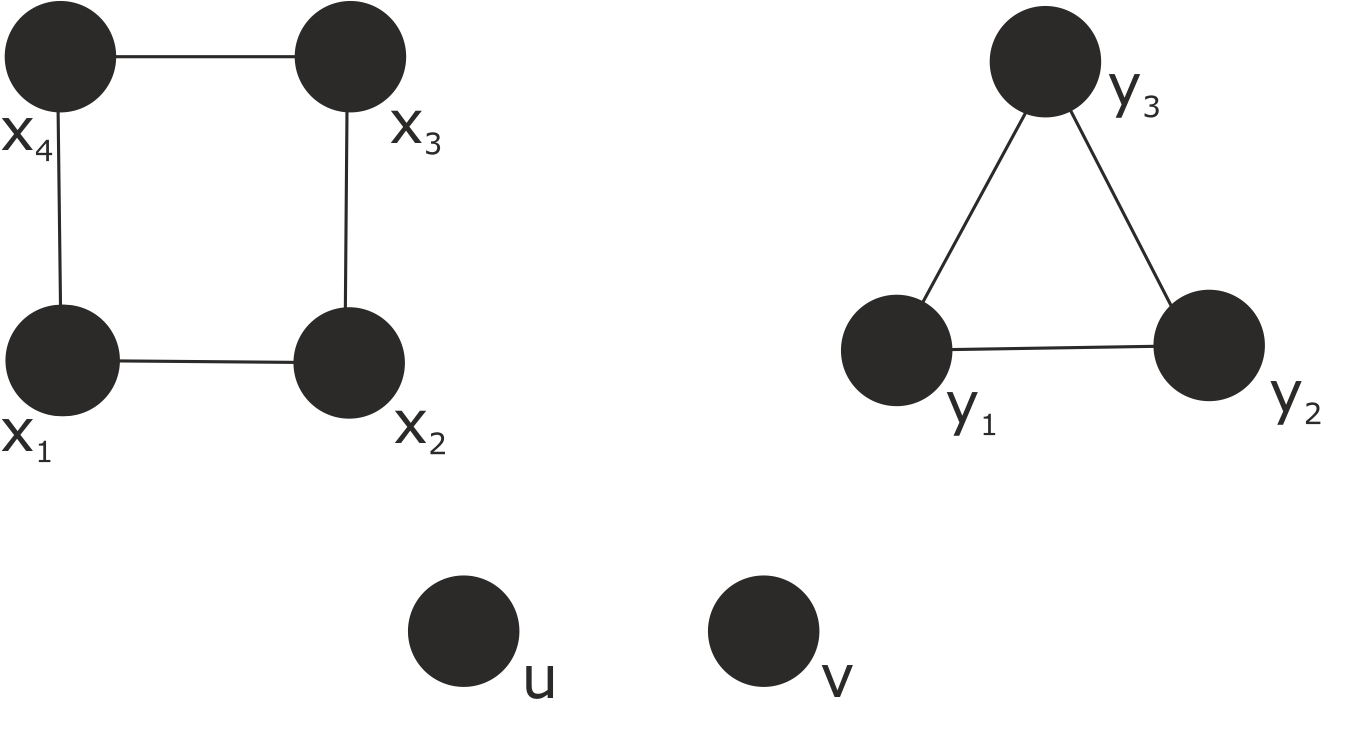}
		\caption{Independent $C_{4}$ and $C_{3}$}
		\label{fig:c4c3}
	\end{figure}

	If $u$ or $v$ have two neighbors not contained in $C_{4}$ or $C_{3}$ we have three
	independant triplets, so let us assume this is not the case. This means that
	both $u$ and $v$ have at least two neighbors each in $C_{4}\cup C_{3}$. If $%
	u $ or $v$ are adjacent to two neighbor vertices in $C_{4}$ then together
	they form $C_{5}$, and with $C_{3}$, we have the conditions form Claim 1, so 
	$G$ is safely $3$-colorable. Let us distinguish two cases and their subcases:
	
	1) Let $u$ be adjacent to any vertex in $C_{4}$, say $x_{1}$, and let us
	observe neighbors of $v$.
	
	1.1.) If $v$ is adjacent to $u$ or to $x_{1}$, then $vux_{1}$, $%
	x_{2}x_{3}x_{4}$ and $y_{1}y_{2}y_{3}$ are independent triplets.
	
	1.2.) If $v$ is adjacent to $x_{2}$, $x_{3}$ or $x_{4}$ then in $C_{4}\cup
	\{u\}\cup \{v\}$ we have two triplets.
	
	1.3.) If $v$ is not adjacent to $u$ nor to $C_{4}$ then two neighbors of $v$
	are in $C_{3}$ and in order for $G$ to be connected, $C_{3}$ must be connected
	to $u$ or to $C_{4}$. If $u$ is adjacent to some vertex in $C_{3}$ then the triplets can be
	formed such that one contains $u$ in the middle and $2$ vertices with which $%
	u $ is adjacent, one in $C_{3}$ and one in $C_{4}$. The other triplet
	contains the three remaining vertices of $C_{4}$, and the third contains $v$
	and two remaining vertices of $C_{3}$. If instead there exists an edge
	between $C_{3} $ and $C_{4}$ then one triplet contains one vertex from $%
	C_{3} $ and two from $C_{4}$ and the other two are easily seen.
	
	2) $u$ is adjacent to two vertices in $C_{3}$. If $v$ is adjacent to any of
	the verices in $C_{4}$ then the proof is the same as the case 1.3. If this
	is not the case then $v$ either has two neighbors in $C_{3}$ and a neighbor
	not contained in $C_{4}\cup C_{3}$, in which case the triplets are easily
	seen, or $v$ is adjacent to all three vertices in $C_{3}$. But now there
	again must exist a path of at least one edge between $C_{4}$ and $C_{3}$ and the triplets are
	easily found. $\square $
	
	\textbf{CLAIM\ 3}. A connected graph $G$ with $n\geq 9$ and $\delta \geq 3$
	which contains two independent $C_{3}$ cycles is safely $3$-colorable or it is a
	double windmill.
	
	\textit{Proof of Claim 3.} There are six vertices in two $C_{3}$ cycles, so $G$ must
	contain at least three more vertices. Let us denote them by $u$, $v$ and $w$, and the vertices in cycles by $x_{i}$, $y_{i}$, $i=1,2,3$. (Figure \ref%
	{fig:c3c3}.)
	
	\begin{figure}[h]
		\centering\includegraphics[scale=0.4]{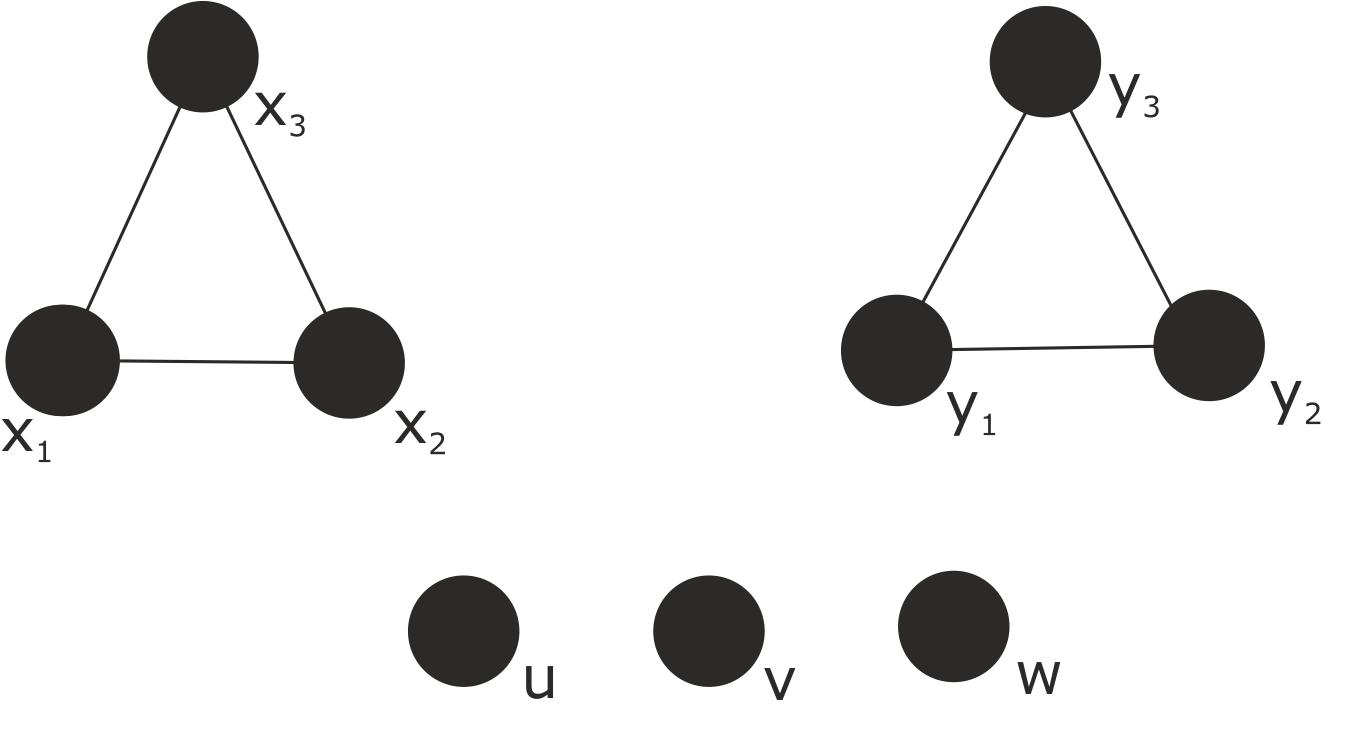}
		\caption{Two independent $C_{3}$ cycles}
		\label{fig:c3c3}
	\end{figure}

	Again, if any vertex of $u$, $v$, $w$
	has two neighbors not contained in $C_{3}\cup C_{3}$ the claim is proven, so
	let us assume that those three vertices have at least two neighbors in $%
	C_{3}\cup C_{3}$. Let us observe also that if any vertex of $u$, $v$, $w$
	has both neighbors in the same $C_{3}$ they form a $C_{4}$ and by Claim 2,
	the Claim 3 is proven. So each of $u$, $v$, $w$ has exactly one neighbor in
	the set $\{x_{1},x_{2},x_{3}\}$ and one in the set $\{y_{1},y_{2},y_{3}\}$.
	Now we consider two cases:
	
	1) Suppose that the joint number of neighbors of $u$, $v$ and $w$ is at least two in
	each of the sets $\{x_{1},x_{2},x_{3}\}$ and $\{y_{1},y_{2},y_{3}\},$ then
	the triplets are easily found (see Figure \ref{fig:c3c3_1}).
	
	\begin{figure}[h]
		\centering\includegraphics[scale=0.4]{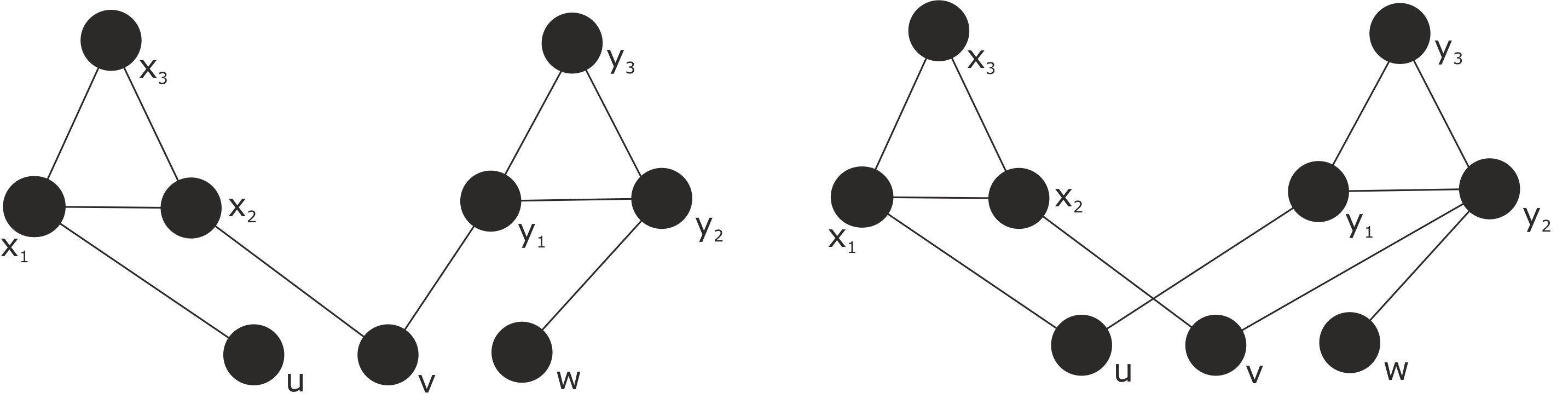}
		\caption{Examples of the first case}
		\label{fig:c3c3_1}
	\end{figure}
	
	2) $u$, $v$ and $w$ have the same neighbor in one of the cycles.
	
	Let us assume it is the vertex $x_{1}$.
	
	2.1.) $u$, $v$ and $w$ all have different neighbors in the set $%
	\{y_{1},y_{2},y_{3}\}$. Obviously $u$, $v$ and $w$ each must have one more
	neighbor. If any of them have a neighbor not contained in $C_{3}\cup C_{3}$
	the triplets can easily be found. (this includes the case when any two of
	them are adjacent by an edge, for then we have a $C_{4}$ and a $C_{3}$ and
	the claim follows from Claim 2.) If any of them have another neighbor in $%
	\{x_{1},x_{2},x_{3}\}$, we are in conditions of the case 1). And if any of
	them have another neighbor in $\{y_{1},y_{2},y_{3}\}$ then we have a $C_{4}$
	and a $C_{3}$ and the claim follows from Claim 2.
	
	2.2.) $u$, $v$ and $w$ have two different neighbors in the set $%
	\{y_{1},y_{2},y_{3}\}$. Without the loss of generality let $u$ and $v$ be
	adjacent to $y_{1}$ and $w$ to $y_{2}$. Now the triplets are $vy_{1}u$, $%
	wy_{2}y_{3}$ and $x_{1}x_{2}x_{3}$.
	
	2.3.) $u$, $v$ and $w$ are all adjacent to the same vertex in the other
	cycle. Without the loss of generality let it be $y_{1}$. First, let us
	observe the vertices $x_{2}$, $x_{3}$, $y_{2}$ and $y_{3}$. If there are any
	edges between them which are not in already observed cycles then the triplets
	are easily seen. So let us assume there aren't any. Now let us look at $u$, $%
	v$ and $w$. They all have two observed neighbors for now. If any of them are adjacent
	to any other vertices in cycles, besides $x_{1}$ and $y_{1}$ then we have
	the conditions of one of the previous cases. So the remaining options are:
	they are either adjacent to each other, or they have some unobserved neighbors. If
	they are all adjacent in a triplet we obviously have three independent triplets. Let any
	two of them be adjacent (say $u$ and $v$). Then, $w$ must be adjacent to a
	unobserved vertex, $z$. $z$ must have at least two more neighbors, and if it is adjacent to any of the vertices in $\{u,v,x_{2},x_{3},y_{2},y_{3}\}$
	then the triplets are easily seen. However if $z$ is adjacent only to $x_{1}$
	and $y_{1}$ the triplets cannot be found. It is easily seen that this graph
	is a double windmill (whether $x_{1}$ and $y_{1}$ are adjacent to each other
	makes no difference) and the Claim follows for Lemma \ref{lm1}. Similarly, if
	none of $u$, $v$ and $w$ are adjacent to each other they each must have at
	least one more neighbor. If any two have the same neighbor the triplets can
	be found. Suppose that each of them has a different neighbor. Either there are three independent
	triplets or those neighbors are further adjacent only to $x_{1}$ and $y_{1}$
	and we have again obtained a double windmill, (Figure \ref{fig:c3c3_2}.), so the
	Claim follows from Lemma \ref{lm1}. $\square $
	
		\begin{figure}[h]
		\centering\includegraphics[scale=0.4]{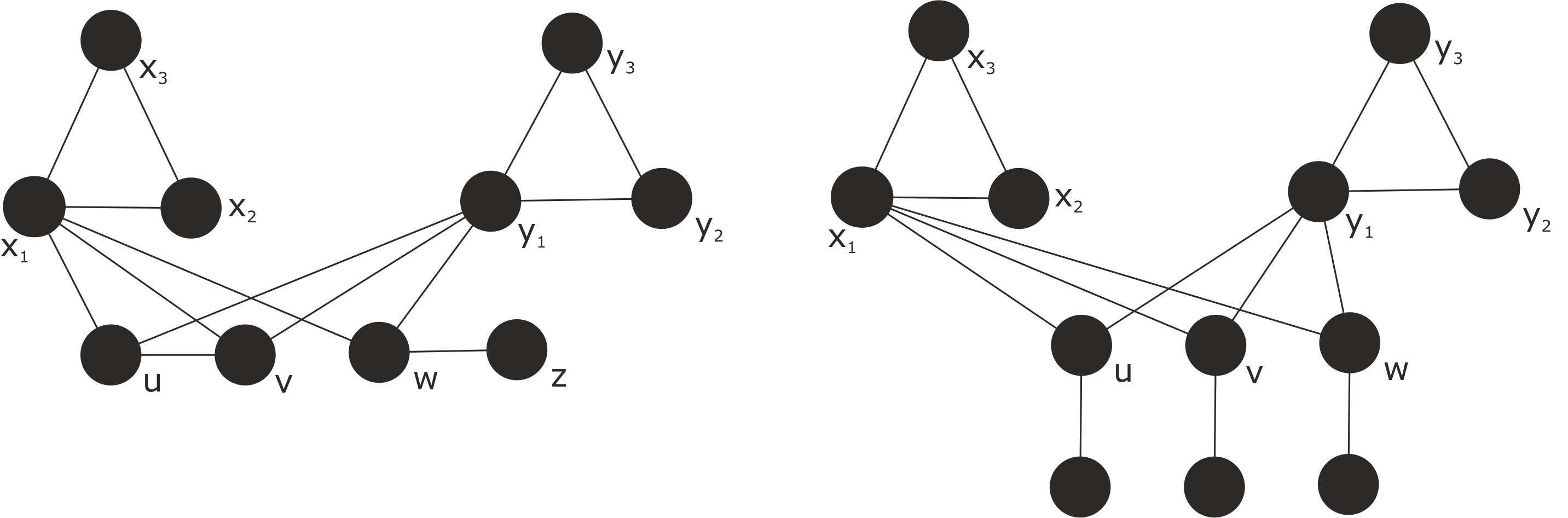}
		\caption{Examples of the second case}
		\label{fig:c3c3_2}
	\end{figure}

	\textbf{CLAIM\ 4}.\ A connected graph $G$ with $n\geq 9$ and $\delta \geq 3$
	which contains two independent $C_{4}$ cycles is safely $3$-colorable.
	
	\textit{Proof of Claim 4.}\ Besides the vertices in the cycles, $G$ has at least one
	more vertex, $u$, and as before, let us assume $u$ has at least two neighbors in $%
	C_{4}\cup C_{4}$. If those two neighbors are in different cycles then $u$ and
	those neighbors make one triplet, and the other two are the remains of
	cycles. So let us assume both neighbors of $u$ are in one of the cycles. If
	those are adjacent vertices then they form a triangle with $u$ and we have
	one $C_{3}$ and one $C_{4}$ cycle so the claim follows from Claim 2. Let us
	observe a case when $u$ is connected to two unadjacent vertices of the same $%
	C_{4}$ cycle. Since $G$ is a connected graph there must be a path between the
	two cycles so let us assume that they are connected by an edge. There are
	two options.
	
	1) The edge is incident to one of the neighbors of $u$.
	
	2) The edge in not incident to any neighbor of $u$.
	
	In both cases the three triplets are easily seen, which is illustrated in
	Figure \ref{fig:c4c4}. $\square $
	
	\begin{figure}[h]
	\centering\includegraphics[scale=0.4]{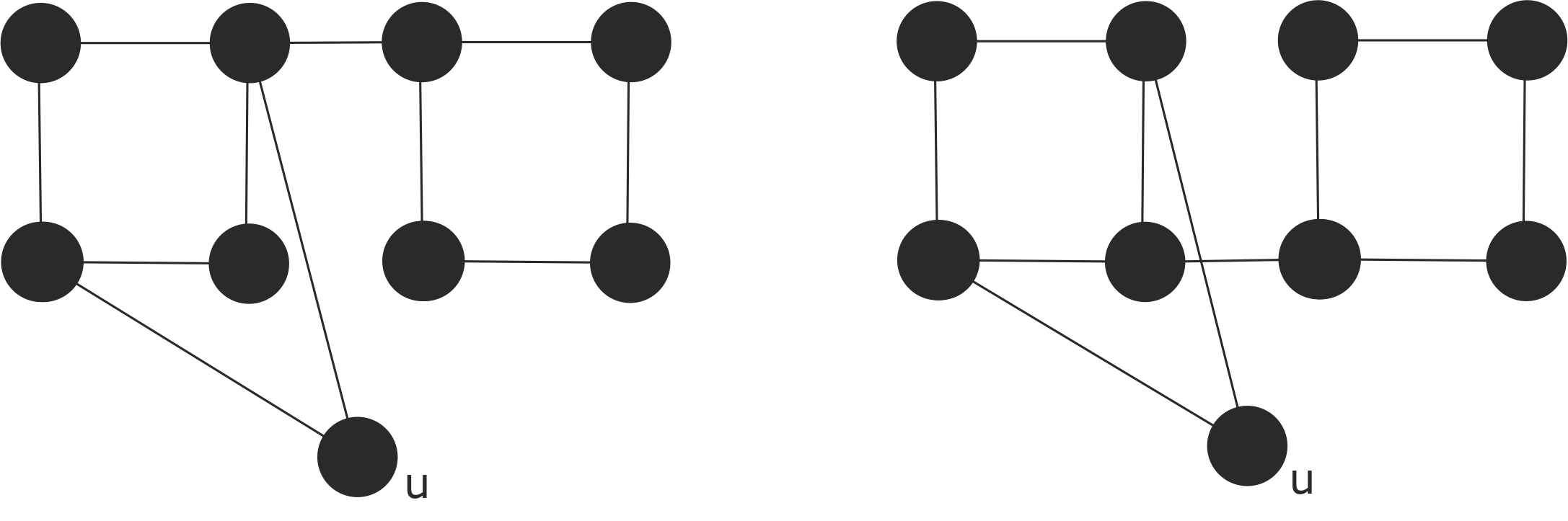}
	\caption{Two independent $C_{4}$ cycles}
	\label{fig:c4c4}
\end{figure}
	
	\textbf{CLAIM 5}.\ A connected graph $G$ with $n\geq 9$ and $\delta \geq 3$
	which contains independent $C_{4}$ and $T_{3}$ is safely $3$-colorable.
	
	\textit{Proof of Claim 5.} Obviously $G$ has at least two more vertices which are not
	in $C_{4}$ or $T_{3}$, let us denote them by $u$ and $v$, and same as in
	previous proofs let us assume they have at least two neighbors in $C_{4}\cup T_{3}$.
	If $u$ and $v$ both have at least one neighbor in $C_{4}$ the triplets can
	be found as follows:
	1) If $u$ and $v$ are neighbors with the same vertex, $x$,
	in $C_{4}$ the triplets are $uxv$, $T_{3}$ and $C_{4}\backslash \{x\}$.
	
	2) If $u$ and $v$ are neighbors with different vertices in $C_{4}$, namely $%
	x_{1}$ and $x_{2}$, then the triplets are $ux_{1}$ and neighbor of $x_{1}$
	in $C_{4}$ different from $x_{2}$, $vx_{2}$ and the remaining vertex of $%
	C_{4}$, and $T_{3}$.
	
	So let us assume at least one of $u$ and $v$ doesn't have any neighbors in $%
	C_{4}$, and let us assume it is vertex $u$. Then both neighbors of $u$ are
	in $T_{3}$ and if they are adjacent vertices we have a $C_{3}$ and a $C_{4}$
	so the claim follows from Claim 2. If they are not adjacent vertices
	then we have two $C_{4}$ cycles and the claim follows from Claim 4.$\square $
	
	\textbf{CLAIM 6}. A connected graph $G$ with $n\geq 9$ and $\delta \geq 3$
	which contains independent $C_{3}$ and $T_{3}$ is safely $3$-colorable or it is a
	double windmill.
	
	\textit{Proof of Claim 6.} It is easy to see that $G$ must have at least three more
	vertices besides $C_{3}$ and $T_{3}$. Let us denote them $u$, $v$ and $w$%
	and assume that each of them has at least two neighbors in $C_{3}\cup T_{3}$,
	following the same reasoning as in previous proofs. Also let us denote the
	vertices in $C_{3}$ by $x_{1},x_{2},x_{3}$, and vertices in $T_{3}$ by $%
	y_{1},y_{2},y_{3}$. If one of $u$, $v$ and $w$ has both neighbors in a cycle 
	$C_{3}$ then they form a $C_{4}$ cycle and the claim follows form Claim 5.
	Analogously, if any vertey of $u$, $v$ and $w$ has both neighbors in $T_{3}$
	they form a $C_{3}$ or a $C_{4}$ and the claim follows from Claim 3, or from
	Claim 2. So let us assume each of $u$, $v$ and $w$ has one neighbor in $%
	C_{3}$ and one in $T_{3}$. We consider three cases:
	
	1) Exactly two vertices from $\{u,v,w\}$ have the same neighbor in $C_{3}$,
	for instance $u$ and $v$ are adjecent with $x_{1}$, and $w$ is adjacent with 
	$x_{2}$. Then the triplets are $ux_{1}v$, $wx_{2}x_{3}$ and $y_{1}y_{2}y_{3}$%
	.
	
	2) $u$, $v$ and $w$ all have different neighbors in $C_{3}$, namely $x_{1}$, 
	$x_{2}$ and $x_{3}$ (say in this order).
	
	2.1.) $u$, $v$ and $w$ all have different neighbors in $T_{3}$, namely $%
	y_{1} $, $y_{2}$ and $y_{3}$ (say in this order). The triplets are $%
	x_{1}uy_{1}$, $x_{2}vy_{2}$ and $x_{3}wy_{3}$.
	
	2.2.) $u$ and $v$ have the same neighbor $y_{1}$ and $w$ is adjacent to $%
	y_{2}$ or $y_{3}$. Then the triplets are $uy_{1}v$, $wy_{2}y_{3}$, $%
	x_{1}x_{2}x_{3}$.
	
	2.3.) $u$ and $v$ have the same neighbor $y_{2}$ and $w$ is adjacent to $%
	y_{1}$ (or $y_{3}$). Then the triplets are $x_{2}x_{1}u$, $x_{3}wy_{1}$(or $%
	y_{3}$), $vy_{2}y_{3}$(or $y_{1}$).
	
	2.4.) All three vertices are adjacent to the same vertex in $T_{3}$. If that
	vertex is the end one, let us assume it is $y_{1}$, then we observe neighbors
	of $y_{3}$. If $y_{3}$ is adjacent to $y_{1}$ we have two $C_{3}$ cycles and
	the claim follows from the Claim 3. If $y_{3}$ is adjacent to any of $u$, $v$
	and $w$ we have a $C_{4}$ and a $C_{3}$. If $y_{3}$ has a neighbor not in
	the set $\{u,v,w,x_{1},x_{2},x_{3},y_{1}\}$ then the triplets are easily
	found, and if $y_{3}$ is adjacent to any vertex in $C_{3}$, for instance, $%
	x_{1}$, then the triplets are $y_{2}y_{3}x_{1}$, $uy_{1}v$ and $wx_{3}x_{2}$.
	(Figure \ref{fig:c3t3}.)
	
		\begin{figure}[h]
		\centering\includegraphics[scale=0.4]{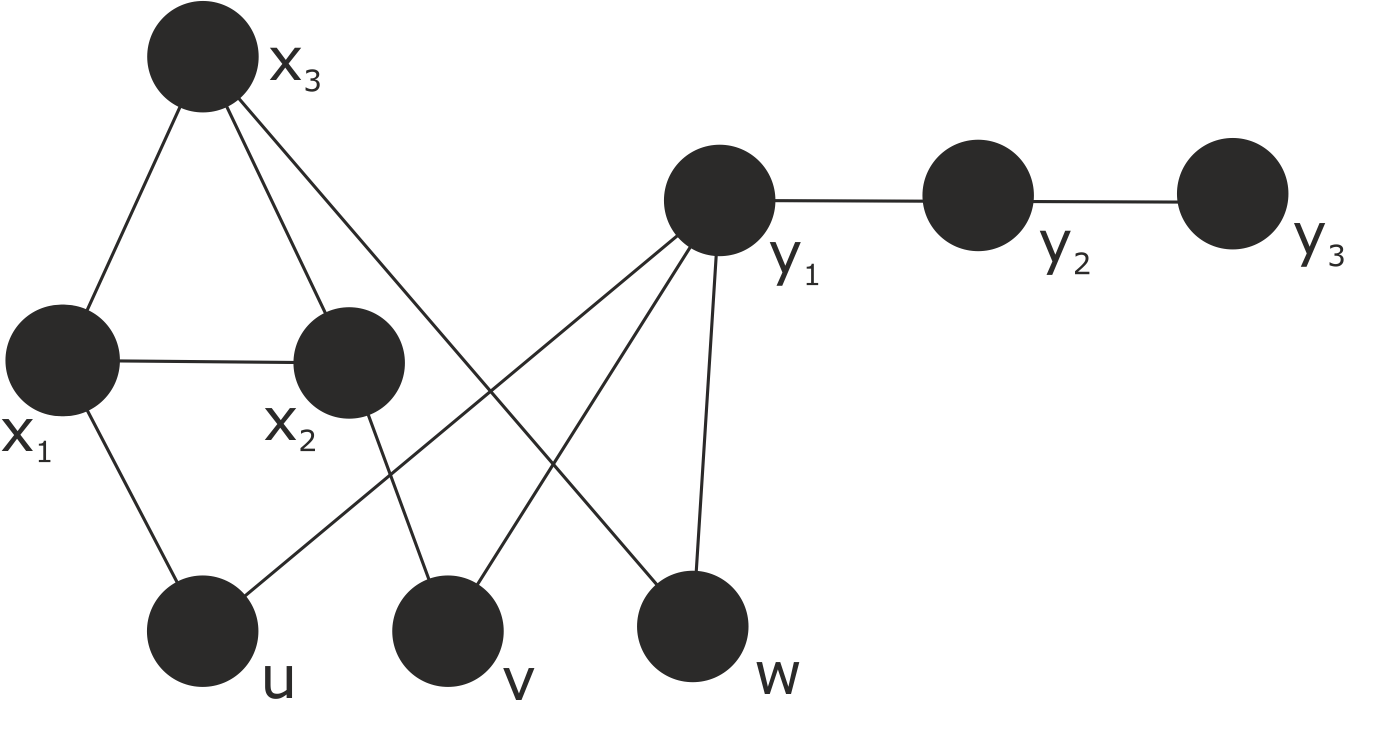}
		\caption{Independent $C_{3}$ and $T_{3}$}
		\label{fig:c3t3}
	\end{figure}
	
	On the other hand, if $u$, $v$ and $w$ are not adjacent to end vertex in $%
	T_{3}$ but instead are adjecent to $y_{2}$, again we observe possible
	neighbors of $y_{3}$ and very similarly come to the same conclusions.
	
	3) The third option is that $u$, $v$ and $w$ all have the same neighbor in $%
	C_{3}$ and let us assume it is $x_{1}$. We consider some subcases, depending
	on neighbors of $u$, $v$ and $w$ in $T_{3}$.
	
	3.1.) $u$, $v$ and $w$ all have different neighbors in $T_{3}$. Let us
	assume they are adjacent to $y_{1}$, $y_{2}$ and $y_{3}$, in this order.
	(Figure \ref{fig:c3t3_1}.) 
	
		\begin{figure}[h]
		\centering\includegraphics[scale=0.4]{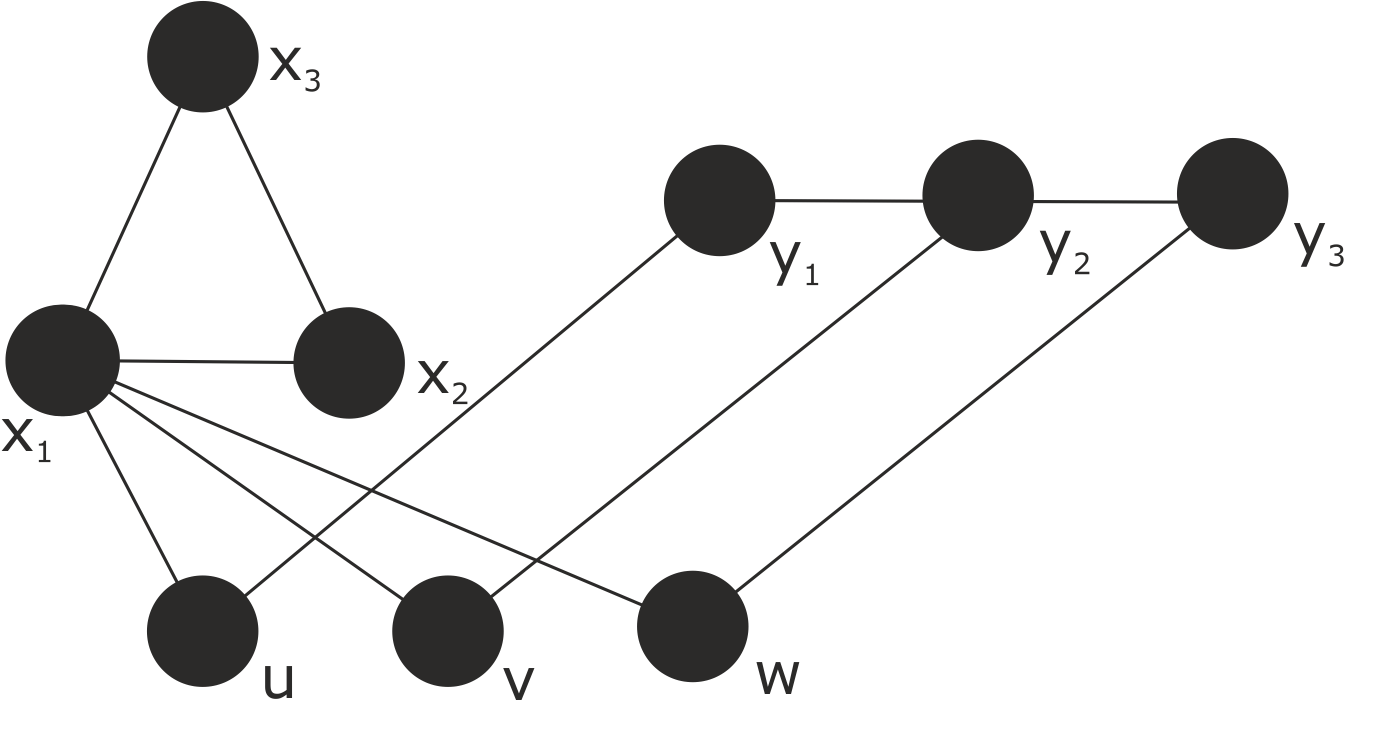}
		\caption{Subcase 3.1.}
		\label{fig:c3t3_1}
	\end{figure}
	
	We consider possible neighbors of $%
	x_{2}$. If it is adjacent to some vertex not in the set $%
	\{u,v,w,y_{1},y_{2},y_{3}\}$ then triplets can be easily found. We have
	already considered the cases in which it is adjacent to any of $\{u,v,w\}$,
	so let us assume it is adjacent to some vertex in $T_{3}$. If it is the end
	vertex, for instance $y_{1}$, then the triplets are $x_{3}x_{2}y_{1}$, $%
	ux_{1}v$ and $wy_{3}y_{2}$. And if it is adjacent to $y_{2}$ then we have to
	further consider the neighbors of $y_{1}$. Again, the triplets are easily
	found if $y_{1}$ has a previously unobserved neighbor, or if it is adjacent to $y_{3}$, $u$, $%
	v $ or $w$.
	
	If $y_{1}$ is adjacent to $x_{2}$ or to $x_{3}$ then the triplets are $%
	x_{3}x_{2}y_{1}$, $ux_{1}v$ and $wy_{3}y_{2}$.
	
	If $y_{1}$ is adjacent to $x_{1}$ then we must look at $x_{3}$. 
	It is in the same position as $x_{2}$, so all of the previous
	consideration works. And again we come to the case when also $x_{3}$ is
	adjecent to $y_{2}$. But now we have two independent $C_{3}$ cycles, $%
	x_{1}y_{1}u$, and $x_{2}y_{2}x_{3}$. (Figure \ref{fig:c3t3_2}.) 
	
		\begin{figure}[h]
		\centering\includegraphics[scale=0.4]{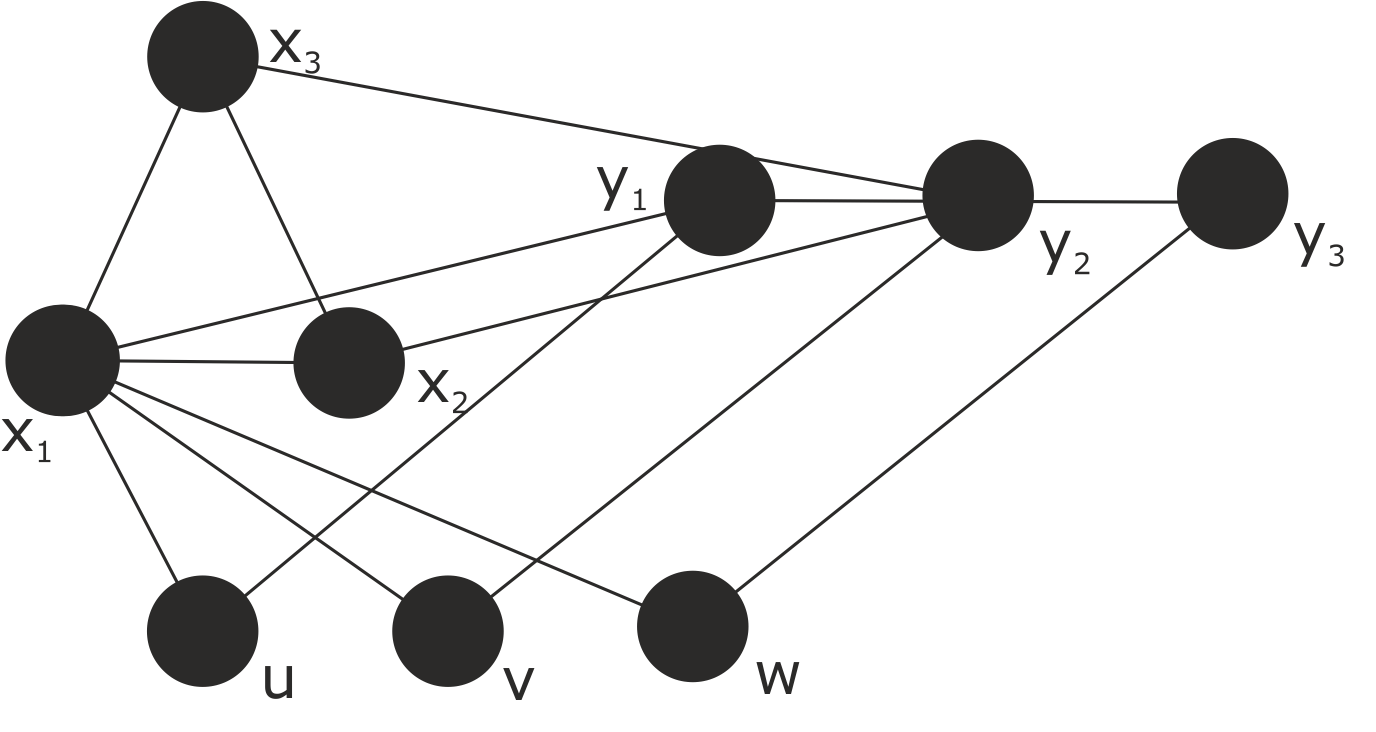}
		\caption{Two independent triangles $x_{1}y_{1}u$ and $x_{2}y_{2}x_{3}$}
		\label{fig:c3t3_2}
	\end{figure}
	
	$u$, $v$ and $w$ have exactly two different neighbors in $T_{3}$. Let us assume 
	$u$ and $v$ have the same neighbor. If it is the end vertex, for instance $%
	y_{1}$, then the vertices $ux_{1}vy_{1}$ form a $C_{4}$ and $w$, $y_{2}$ and $%
	y_{3}$ form a $T_{3}$ so the claim follows from Claim 5. On the other hand,
	if $u$ and $v$ are adjacent to $y_{2}$ and $w$ to $y_{1}$ or $y_{3}$ (say $%
	y_{1}$), then $ux_{1}vy_{2}$ form a $C_{4}$ and an independent triplet can
	be found by looking at $w$, $y_{1}$ and the remaining neighbors of $w$.
	
	3.3.) $u$, $v$ and $w$ all have the same neighbor in $T_{3}$.
	
	3.3.1) Let that neighbor be one of the end ones, say $y_{1}$. Let us look at
	the remaining neighbors of $y_{3}$.
	
	If $y_{3}$ has a previously unobserved neighbor, $z$, then we have triplets $uy_{1}v,$ $%
	x_{1}x_{2}x_{3}$ and $y_{2}y_{3}z$.
	
	If $y_{3}$ is adjacent to $y_{1}$ we have two $C_{3}$ cycles so the graph is 
	safely $3$-colorable or it is a double windmill by Claim 3.
	
	If $y_{3}$ is adjacent to $u$, $v$ or $w$ we have a $C_{4}$ and a $C_{3}$,
	so the claim follows from Claim 2.
	
	If $y_{3}$ is adjacent to at least two vertices in $C_{3}$, then $y_{3}$
	and $C_{3}$ form a $C_{4}$ cycle, and together with $uy_{1}v$ triplet we can
	apply Claim 5.
	
	3.3.2) $u$, $v$ and $w$ are adjacent to $y_{2}$.
	
	Analogous claims can be found by observing neighbors of $y_{1}$ and $y_{2}$. 
	$\square $
	
	\textbf{CLAIM 7}. A connected graph $G$ with $n\geq 9$ and $\delta \geq 3$
	which contains two independent $T_{3}$ triplets is safely $3$-colorable or it is a
	double windmill.
	
	\textit{Proof of Claim 7.} We can easily see that $G$ must have at least three more
	vertices not contained in the two triplets, let us denote them $u$, $v$ and $%
	w$. We can also assume, as before, each of them has at least two
	neighbors in the two triplets. If one of these vertices has two neighbors in
	the same triplet then they form a $C_{4}$ or a $C_{3}$ and we proceed as in
	the Claim 5 or 6. Let us assume that each of $u$, $v$ and $w$ has one
	neighbor in one $T_{3}$ and the other neighbor in the other $T_{3}$. Let us
	denote the vertices in the triplets by $x_{1},x_{2},x_{3}$ and $%
	y_{1},y_{2},y_{3}$. We distinguish four cases:
	
	1) In one of the triplets, $u$, $v$ and $w$ have exactly two neighbors.
	
	1.1.) A shared neighbor is the end one. Without the loss of generality we
	may assume $u$ and $v$ are adjacent to $x_{1}$, and $w$ is adjacent to $%
	x_{2} $ or $x_{3}$. The triplets are $ux_{1}v$, $wx_{2}x_{3}$, $%
	y_{1}y_{2}y_{3}$.
	
	1.2.) A shared neighbor is the middle one. Let us assume $u$ and $v$ are
	adjacent to $x_{2}$ and $w$ is adjacent to $x_{1}$. $x_{3}$ must have at
	least two more neighbors, and if any one of them is in the set $%
	\{u,v,w,x_{1}\}$ we have the conditions of some of the previous cases. If
	one or both of them are some unobserved vertices the triplets can be easily found.
	And if both of them are in $\{y_{1},y_{2},y_{3}\}$ then they form a $C_{3}$
	or a $C_{4}$ and together with the triplet $ux_{2}v$ we can proceed as in
	the Claim 5 or 6.
	
	2) $u$, $v$ and $w$ all have different neighbors in both triplets. Here the
	solution is easily seen, since $u$, $v$ and $w$ are centers of their
	independent triplets.
	
	3) $u$, $v$ and $w$ have the same neighbor in one triplet and all the
	different neighbors in the other triplet.
	
	3.1.) The same neighbor is the end one. Let us assume $u$, $v$ and $w$ are
	adjacent to $x_{1}$, and more, $u$ is adjacent to $y_{1}$, $v$ to $y_{2}$,
	and $w$ to $y_{3}$. $x_{3}$ must have at least two more neighbors. We have
	already considered the options when any of them are in the set $%
	\{u,v,w,x_{1}\}$ and if any of them is some unobserved vertex, the triplets are
	easily seen. But, as in the case 1.2., if $x_{3}$ is adjacent to two
	vertices of the set $\{y_{1},y_{2},y_{3}\}$ they form a $C_{3}$ or a $C_{4}$%
	, and we already have an independent triple $ux_{1}v$.
	
	3.2) The same neighbor is the middle one. This case is analogous to the case
	3.1.
	
	4) $u$, $v$ and $w$ have the same neighbor in each of the triplets.
	
	4.1.) The shared neighbor is the end one in at least one of the triplets
	(say $x_{1}$). We consider the two remaining neighbors of $x_{3}$. Either
	one of them is an unobserved vertex, in which case the triplets are easily seen, of
	both must be in $\{y_{1},y_{2},y_{3}\}$ and we find either $C_{4}$ or $C_{3}$%
	, and an independent triplet.
	
	4.2.) The shared neighbor is the middle one in both triplets, in other words 
	$u$, $v$ and $w$ are adjacent to $x_{2}$ and $y_{2}$. Let us observe
	neighbors of $x_{1}$. If it is adjacent to $x_{3}$ or any of $\{u,v,w\}$
	than we have a $C_{3}$ and a $T_{3}$ and the claim follows as the Claim 6.
	If $x_{1}$ has two new neighbors than the triplets are easily seen. If $%
	x_{1} $ has exactly one unobserved neighbor, namely $x$, and is adjacent to $y_{1}$
	or $y_{3}$ (let us assume $y_{1}$), then the triplets are $xx_{1}y_{1}$, $%
	ux_{2}v$, $wy_{2}y_{3}$. And if $x_{1}$ has exactly one unobserved neighbor, namely 
	$x$, and is adjacent to $y_{2}$, then we turn to the neighbors of $x$. If $x$
	is also adjacent to any vertex in $\{x_{2},x_{3},u,v,w,y_{1},y_{2},y_{3}\}$
	we have a $C_{3}$ or a $C_{4}$ and a triple $T_{3}$, and if it has two new
	neighbors then the independent triplets are easily seen. (Figure \ref{fig:t3t3}).
	$\square $
	
		\begin{figure}[h]
		\centering\includegraphics[scale=0.4]{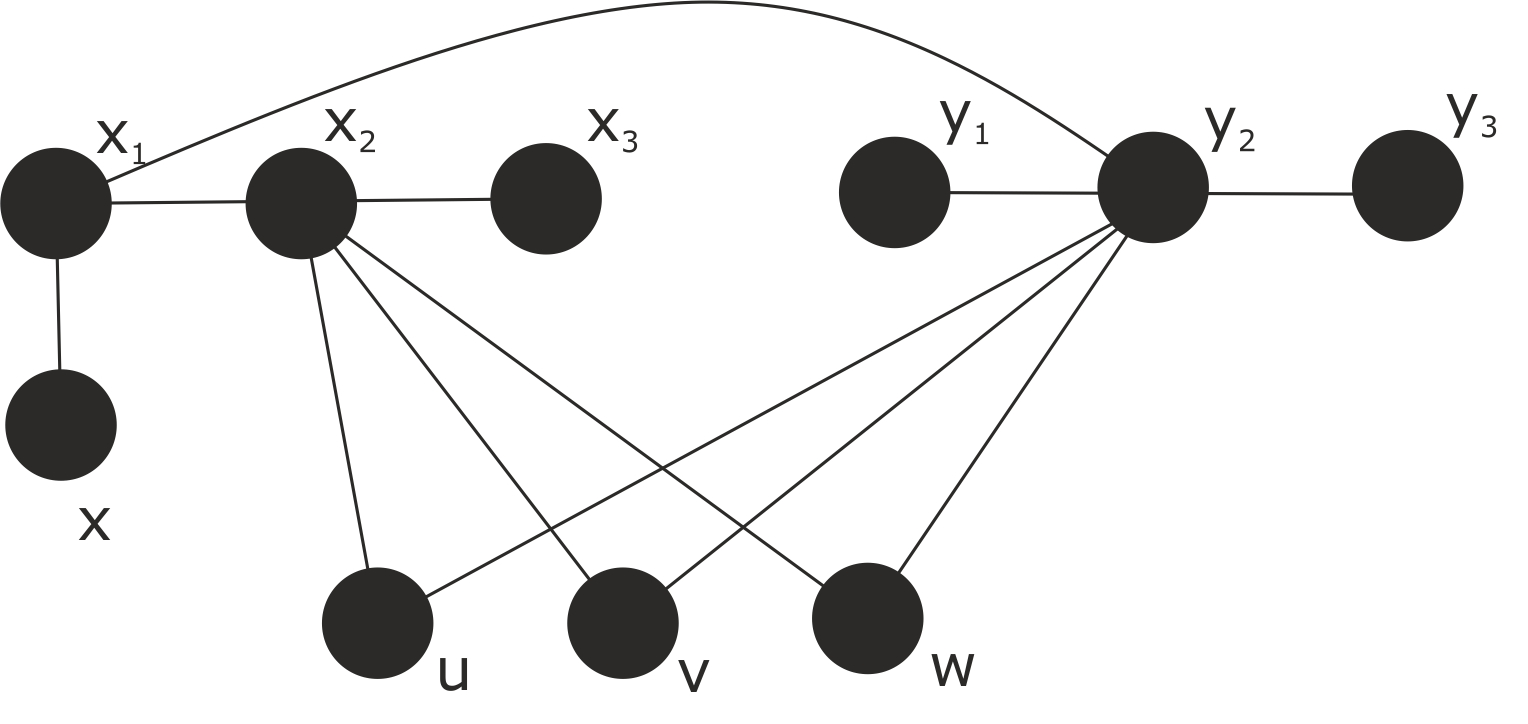}
		\caption{A subcase of 4.2.}
		\label{fig:t3t3}
	\end{figure}
	
	Now the original claim follows from Claims 1-7 and Proposition \ref{prop1}.
\end{proof}

This concludes our observations for connected graphs and we now turn to
graphs with more than one component. Let $G$ be a graph with $\delta (G)\geq
3$. It can be easily seen that:

i) $G$ has at least four vertices in each component.

ii) If $G$ has $3$ or more components, it contains at least three
independent triples so it is safely $3$-colorable.

iii) If $G$ has exactly two components at least one must be safely $3$-colorable or
both must be $1$-safely $3$-colorable in order for $G$ to be safely $3$-colorable.

iv) For a graph to be $1$-safely $3$-colorable it has to have at least $6$ vertices,
because each color must be present at at least two vertices.

v) Graph is $1$-safely $3$-colorable if it has at least two independent triplets.

\begin{lemma}
	A connected graph $G$ with $n\geq 6$ vertices and $\delta \geq 3$ has at least two
	independent triplets.
\end{lemma}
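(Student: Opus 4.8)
The approach I would take is a longest-path argument. Let $P=v_0v_1\cdots v_k$ be a path of maximum length in $G$. Since $G$ is connected the endpoints behave nicely: every neighbor of $v_0$ (and of $v_k$) must lie on $P$, for otherwise $P$ could be prolonged. As $\delta\ge 3$, the vertex $v_0$ has at least three neighbors, all among $v_1,\dots,v_k$, so $k\ge 3$. The whole lemma reduces to showing $k\ge 5$: if $k\ge 5$, then the two sub-paths $v_0v_1v_2$ and $v_3v_4v_5$ are vertex-disjoint $T_3$'s, which is exactly what we need.

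To force $k\ge 5$ I would rule out $k=3$ and $k=4$ by showing that in either case the vertices of $P$ would span too dense a subgraph. If $k=3$, then $N(v_0)\subseteq\{v_1,v_2,v_3\}$ together with $|N(v_0)|\ge 3$ gives $v_0\sim v_1,v_2,v_3$, and likewise $v_3\sim v_0,v_1,v_2$; together with the path edge $v_1v_2$ this makes $G[\{v_0,v_1,v_2,v_3\}]$ a $K_4$. If $k=4$ and $v_0\sim v_4$, then $v_0v_1v_2v_3v_4v_0$ is a $C_5$. If $k=4$ and $v_0\not\sim v_4$, then $N(v_0)=N(v_4)=\{v_1,v_2,v_3\}$, so $v_0$ and $v_4$ are each joined to all of $v_1,v_2,v_3$ (which themselves form the path $v_1v_2v_3$). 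The common feature of all three configurations is that the subgraph induced on $V(P)$ possesses a spanning path with any prescribed vertex as an endpoint: for $K_4$ and $C_5$ this is obvious, and in the third configuration one checks the five needed spanning paths directly (e.g. $v_2v_0v_1v_4v_3$ realizes $v_2$ as an endpoint).

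Having this, connectivity finishes it: when $k\le 4$ we have $|V(P)|\le 5<6\le n$, so some vertex $z$ lies outside $P$, and since $G$ is connected there is an edge $zv_i$ with $z\notin V(P)$. Prepending $z$ to a spanning path of $V(P)$ that ends at $v_i$ yields a path on $|V(P)|+1$ vertices, strictly longer than $P$ --- contradicting maximality. Hence $k\ge 5$, and the two disjoint triplets appear.

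The only part needing any real bookkeeping --- and thus the main obstacle, although it is entirely routine --- is the $k=4$, $v_0\not\sim v_4$ case: one must confirm the forced adjacencies $N(v_0)=N(v_4)=\{v_1,v_2,v_3\}$ and exhibit, for each $i\in\{0,\dots,4\}$, a spanning path of $\{v_0,\dots,v_4\}$ ending at $v_i$. Everything else follows immediately from the maximality of $P$ and the hypothesis $\delta\ge 3$.
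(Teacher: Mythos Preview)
Your argument is correct and takes a genuinely different route from the paper. The paper fixes an arbitrary connected triplet $uvw$, splits into the two cases ``$uvw$ is a triangle'' versus ``$uvw$ is a path'', and in each case inspects the outside neighbours of $u,v,w$ (using $\delta\ge 3$ and $n\ge 6$) through several ad hoc subcases until two disjoint triplets surface. Your longest-path argument replaces that case analysis with a single structural claim: the longest path has at least six vertices. The only case-checking you need is the verification that the short configurations ($K_4$, $C_5$, and the $k=4$, $v_0\not\sim v_4$ graph) are end-vertex-transitive for Hamiltonian paths, after which maximality gives the contradiction immediately. What your approach buys is a cleaner, essentially uniform proof with a reusable by-product (the lower bound on the longest path), at the cost of a small bookkeeping check in the $k=4$ non-adjacent case; the paper's approach stays closer to the object actually sought (the two triplets) but pays for it with more subcases. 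One minor wording point: after noting that some vertex lies outside $P$, you should explicitly choose $z\notin V(P)$ to be a vertex with a neighbour on $P$ (which exists by connectivity), rather than the same $z$ first introduced; the logic is of course fine.
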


\begin{proof}
	Note that $G$ has at least one connected triplet $T_{3}$. Let us denote its
	vertices by $u$, $v$ and $w$. Let us distinguish two cases:
	
	1) $T_{3}$ is not a triangle.
	
	Let $v$ be the middle vertex. $u$ and $w$ have at least two more neighbors.
	Let $x$ and $y$ be the neighbors of $u$. If $w$ has at least one neighbor
	different from $x$ and $y$, let us denote it by $z$, then the triplets are $%
	xuy$ and $zwv$. And if $w$ is also adjacent to $x$ and $y$ and to none else,
	than we observe vertices $x$, $y$ and $v$. They all must have at least one
	more neighbor because of $\delta \geq 3$ and at least one of them must have
	a neighbor not contained in $\{u,v,w,x,y\}$ because of $n\geq 6$. Without
	the loss of generality let us assume that $x$ has a neighbor $z$. Now the
	triplets are $zxu$ and $ywv$.
	
	2) $T_{3}$ is a triangle.
	
	All of $u$, $v$ and $w$ must have at least one more neighbor.
	
	2.1.) They all have the same neighbor, namely $x$. Now we have a $K_{4}$
	graph, and obviously we are missing at least two more vertices because of $%
	n\geq 6$. Whether the two vertices are adjacent to the same vertex in $K_{4}$, or not, the two triplets are easily seen.
	
	2.2.) Two of the vertices, let us assume $u$ and $v$, have the same neighbor, $x$,
	and $w$ has a neighbor $y$. If $x$ and $y$ have no neighbors not contained
	in $\{u,v,w\}$ than at least one of $u$, $v$ and $w$ must have another
	neighbor because of $n\geq 6$. But in that case, the same as in the first case, $%
	x $ or $y$ having another neighbor, the triplets are easily seen.
\end{proof}

Now we have proven the Theorem form the beginning:

\begin{theorem}
	Graph $G$ with $\delta \geq 3$ is safely $3$-colorable if at least one of the
	following stands:
	
	i) $G$ has at least three components;
	
	ii) $G$ has two components with at least $6$ vertices each;
	
	iii) $G$ has at least one component with at least $9$ vertices which is
	different from a double windmill.
	
\end{theorem}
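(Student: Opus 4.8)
The plan is to reduce each of the three hypotheses i)--iii) to results already established. The key preliminary remark, which I would record first, is that for a safe $3$-coloring we have $a=k-1=2$, so condition $1.$ of the definition is automatic: any two vertices of $G$ together carry at most two colors. Hence in each case it suffices to produce a $3$-coloring of $G$ such that, after deleting any two vertices, some surviving component still carries all three colors. A second remark, used throughout: every component $C$ of $G$ satisfies $\delta(C)\geq\delta(G)\geq 3$, since no edge leaves a component; in particular $|V(C)|\geq 4$, and $C$ contains a connected triplet $T_{3}$ (a vertex of degree at least $2$ together with two of its neighbours).

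For i), if $G$ has at least three components, choose one $T_{3}$ from each of three of them. These triplets lie in distinct components, hence are independent, and Proposition \ref{prop1} applies. For ii), if $G$ has two components $C_{1},C_{2}$ with $|V(C_{i})|\geq 6$, then $C_{1}$ is connected with $\delta\geq 3$ and at least $6$ vertices, so by the preceding Lemma it contains two independent triplets; adding a $T_{3}$ of $C_{2}$ produces three independent triplets, and again Proposition \ref{prop1} finishes the argument.

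Case iii) needs a short gluing argument. Let $C$ be a component with $|V(C)|\geq 9$ that is not a double windmill; then $C$ is connected with $\delta\geq 3$, so by the connected-case Theorem $C$ is safely $3$-colorable. Fix such a coloring of $C$, color every vertex outside $C$ with $1$, and call the resulting coloring $\phi$. Given $A\subseteq V(G)$ with $|A|=2$, consider three subcases. If $A\subseteq V(C)$, a component of $C\backslash A$ carrying all three colors is also a component of $G\backslash A$. If $A\cap V(C)=\emptyset$, then $C$ is untouched and is itself a component of $G\backslash A$ carrying all three colors (a safe $3$-coloring of $C$ must use all three colors). If $A=\{c,c'\}$ with $c\in V(C)$ and $c'\notin V(C)$, apply safety of $C$ to the pair $\{c,x\}$ for some further vertex $x\in V(C)$ (which exists as $|V(C)|\geq 9$): this yields a component of $C\backslash\{c,x\}$ with all three colors, which lies inside a single component of $C\backslash\{c\}$, and that component is a component of $G\backslash A$ carrying all three colors. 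In every subcase condition $2.$ holds and condition $1.$ is automatic, so $\phi$ is a safe $3$-coloring of $G$.

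I do not foresee a serious obstacle: the statement follows by combining Proposition \ref{prop1}, the preceding Lemma, and the connected-case Theorem (which itself rests on Lemma \ref{lm1}). The only point that genuinely needs care is the mixed subcase of iii), namely verifying that deleting a single vertex from a safely $3$-colorable component still leaves a component carrying all three colors, together with the routine but essential observation that condition $1.$ of the definition is vacuous because $a=2$.
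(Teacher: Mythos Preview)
Your proposal is correct and follows essentially the same route as the paper: cases i) and ii) reduce to Proposition~\ref{prop1} via the Lemma on two independent triplets, and case iii) invokes the connected-case Theorem. The paper's own proof is just ``follows from previous observations''; your gluing argument in iii), particularly the mixed subcase where you delete an auxiliary second vertex $x\in V(C)$ to invoke safety, makes explicit a step the paper leaves implicit (its observation that one safely $3$-colorable component suffices is asserted but not argued). A minor stylistic difference: for ii) the paper routes through the notion of $1$-safe $3$-colorability of each component, whereas you more directly extract three independent triplets (two from $C_1$, one from $C_2$); your version is cleaner and in fact only uses $|V(C_2)|\geq 4$.
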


\begin{proof}
	The proof follows from previous observations.\bigskip
\end{proof}

\section{Additional results}

We now give some additional results we acquired. First one is regarding the time complexity of the algorithm that checks if a graph that is $3$-colored is safely $3$-colored. 

\begin{remark}
	For any given graph $G$ with $n$ vertices and $m$ edges, and a $3$-coloring $\phi $ of vertices in $G$, an algorithm exists which can
	determine if the coloring is safe $3$-coloring, in $O(n^{2}m)$ time complexity.
	We have to check all pairs of vertices, remove them from $G$ and
	then browse the remaining vertices of $G$ by components, checking if we
	have all $3$ colors. When we find them we move to the next pair of
	vertices, and so on. If we find a pair of vertices such that with their removal there doesn't exist a component with all $3$ colors, then the coloring is not safe. To check all the pairs of
	vertices we need $O(n^{2})$ time (there are $\binom{n}{2}=\frac{n^{2}-n}{2}$ pairs of vertices), and for each pair removed we check the
	remaining components for colors in $O(m)$ time (we check each component by the edges, looking at the colors of those vertices).
\end{remark}

The second remark concerns time complexity of algorithm that checks if a graph has $3$ independent triplets. If it does, then it is safely $3$-colorable. However, the converse is not true. There are safely $3$-colorable graphs that don't contain $3$ independent triplets which is presented in Remark \ref{obrat}.

\begin{remark}\label{alg}
	For any given graph $G$ with $n$ vertices, it takes $O(n^{4})$ time to check if it contains
	three independent triplets. Algorithm is based on checking all $3$-subsets
	of $V(G)$ and determining if those three vertices may be the centers of
	independent triplets. Let us denote the three chosen vertices by $a$, $b$
	and $c$, and by\\
	$s_{A}$ - the set of neighbors of vertex a, not counting
	vertices b and c;\\ 
	$s_{B}$ - the set of neighbors of vertex b, not counting
	vertices and c; \\
	$s_{C}$- the set of neighbors of vertex c, not counting
	vertices b and a; \\
	$s_{AB}$ - the set of neighbors of vertices a and b, not
	counting vertices a, b, c; \\
	$s_{BC}$ - the set of neighbors of vertices b and c, not
	counting vertices a, b, c; \\
	$s_{AC}$ - the set of neighbors of vertices a and c, not
	counting vertices a, b, c; \\
	$s_{ABC}$ - the set of neighbors of vertices a, b and c,
	not counting vertices a, b, c.\\\medskip
	
	Also, let us denote by $n_{X}$ the cardinal number of the set $s_{X}$, $$
	X=A,B,C,AB,AC,BC,ABC.$$ By examining these cardinal numbers we can determine
	if $a$, $b$ and $c$ can be centers of independent triplets. The way of doing
	this is given in the following theorem. Checking all $3$-subsets of vertices
	takes $O(n^{3})$ time, and calculating the equations from Theorem \ref{abc} takes $O(n)
	$ time.
\end{remark}

\begin{theorem}\label{abc}
	Let $a$, $b$ and $c$ be any three different vertices in the graph, with the notation from Remark \ref{alg}. Vertices $%
	a$, $b$ and $c$ are the centers of independent triplets if and only if the
	following holds
	\begin{center}
			$n_{A},n_{B},n_{C} \geq 2;$ \\
		$n_{A}+n_{B}-n_{AB} \geq 4;$ \\
		$n_{A}+n_{C}-n_{AC} \geq 4;$ \\
		$n_{B}+n_{C}-n_{BC} \geq 4;$ \\
		$n_{A}+n_{B}+n_{C}-n_{AB}-n_{AC}-n_{BC}+n_{ABC} \geq 6.$
	\end{center}
\end{theorem}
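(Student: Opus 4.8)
The plan is to recognise the property ``$a,b,c$ are centres of independent triplets'' as a bipartite matching condition and then read off the five displayed inequalities as exactly the non-redundant instances of Hall's condition.

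First I would reformulate. A connected triplet centred at $a$ is simply $a$ together with two of its neighbours (the third, triangle-closing edge is optional, so any two neighbours will do). Since the three triplets must be pairwise vertex-disjoint and already contain $a,b,c$ as their centres, the two neighbours chosen for $a$ can be neither $b$ nor $c$, so they form a $2$-element subset $P_A\subseteq s_A$; likewise $P_B\subseteq s_B$ and $P_C\subseteq s_C$. Conversely, given pairwise disjoint $2$-subsets $P_A\subseteq s_A$, $P_B\subseteq s_B$, $P_C\subseteq s_C$, the sets $\{a\}\cup P_A$, $\{b\}\cup P_B$, $\{c\}\cup P_C$ are independent triplets: they are disjoint because the $P_X$ are, and because each $s_X$ already excludes $a,b,c$ and $G$ has no loops. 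Thus the statement is equivalent to: one can pick pairwise disjoint $2$-subsets, one from each of $s_A,s_B,s_C$.

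Next I would model this choice as a matching problem. Form a bipartite graph $B$ whose one side consists of six ``slots'' $A_1,A_2,B_1,B_2,C_1,C_2$ and whose other side is $V(G)\setminus\{a,b,c\}$, joining a slot of type $X$ to every vertex of $s_X$. Picking the three disjoint $2$-subsets is precisely finding a matching of $B$ that saturates all six slots, which by Hall's marriage theorem exists if and only if $|N_B(S)|\ge |S|$ for every set $S$ of slots. Now the two slots of a given type have the same neighbourhood in $B$, and unions of the $s_X$ are monotone in the set of types present; hence among all $2^6$ choices of $S$ the binding ones are: both slots of a single type (giving $n_A,n_B,n_C\ge 2$), both slots of two types (giving $|s_A\cup s_B|\ge 4$, $|s_A\cup s_C|\ge 4$, $|s_B\cup s_C|\ge 4$), and all six slots (giving $|s_A\cup s_B\cup s_C|\ge 6$); every other $S$ is dominated by one of these. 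Finally, since $s_{XY}=s_X\cap s_Y$ and $s_{ABC}=s_A\cap s_B\cap s_C$, inclusion–exclusion rewrites $|s_A\cup s_B|$ as $n_A+n_B-n_{AB}$, and $|s_A\cup s_B\cup s_C|$ as $n_A+n_B+n_C-n_{AB}-n_{AC}-n_{BC}+n_{ABC}$, which are exactly the expressions in the statement. The ``only if'' direction is the same chain read backwards: independent triplets give the matching, hence all Hall inequalities, in particular these.

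The argument is essentially routine once the matching reformulation is set up; the two points that need care are checking that it is $s_A$, and not $N(a)$, that is the correct neighbourhood — that is, that deleting $b,c$ is harmless because they are already committed as the other centres — and verifying that the seven listed inequalities genuinely dominate all Hall instances, the key observation there being that the worst case always takes both slots of every type that appears.
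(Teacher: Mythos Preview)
Your proof is correct and is genuinely different from the paper's argument. The paper proceeds by hand: it first establishes the two-vertex analogue (their Claim~1) by splitting on the value of $n_a=n_A-n_{AB}$ and exhibiting the triplets case by case, and then attacks the three-vertex statement by a further case split on whether some $n_a,n_b,n_c$ reaches $2$ (reduce to Claim~1), or all are $0$, or exactly one or two are $1$, with subcases on $n_{abc}$ inside each branch. Your route---recasting ``choose two disjoint neighbours for each centre'' as saturating six slots in a bipartite graph and invoking Hall's theorem---collapses all of that case work: the seven inequalities are read off as the Hall conditions for the seven nonempty type-sets, and inclusion--exclusion translates the union sizes into the displayed quantities. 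What your approach buys is brevity and an immediate explanation of \emph{why} exactly these inequalities appear; it also generalises at once to $k$ centres. What the paper's approach buys is self-containment (no appeal to Hall) and, in each case, an explicit recipe for which neighbours to take.
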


\begin{proof}
	First let us prove the analogous claim, for two vertices.
	
	\textbf{Claim 1.} Vertices $a$ and $b$ are centers of independent triplets if and
	only if the following holds%
	$$
	n_{A},n_{B} \geq 2;$$
	$$n_{A}+n_{B}-n_{AB} \geq 4,
	$$
	where $n_{A}$ is the number of neighbors of vertex $a$, not counting vertex $%
	b$, $n_{B}$ is the number of neighbors of $b$ not counting vertex $a$, and $%
	n_{AB}$ is the number of vertices adjacent to $a$ and $b$, not counting
	themselves, if they are adjacent. If $a$ and $b$ are the centers of
	independent triplets, it is easy to see that the given inequalities hold.
	Let us prove the converse. We now denote 
	\begin{center}
			$s_{a} =s_{A}\backslash s_{AB}$ \\
		$s_{b} =s_{B}\backslash s_{AB}$ \\
		$s_{ab} =s_{AB}.$
	\end{center}

	It holds%
	\begin{center}
		$n_{a} =n_{A}-n_{AB}$ \\
	$n_{b} =n_{B}-n_{AB}$ \\
	$n_{ab} =n_{AB},$
	\end{center}
	
	so the inequalities are equivalent to 
	
	\begin{center}
	$n_{a}+n_{ab} \geq 2$ \\
$n_{b}+n_{ab} \geq 2$ \\
$n_{a}+n_{b}+n_{ab} \geq 4.$
\end{center}

	We consider three cases.
	
	1) If $n_{a}\geq 2$ then those two neighbors of $a$ form the $a$-centered
	triplet, and from $n_{b}+n_{ab}\geq 2$ we see that $b$ has at least two
	neighbors which can form another triplet.
	
	2) If $n_{a}=1$ then $n_{ab}\geq 1$, and from $n_{a}+n_{b}+n_{ab}\geq 4$ we
	conclude that one of the following stands:%
		\begin{center}
		$n_{b} =0$ and $n_{ab}\geq 3$ \\
	$n_{b} =1$ and $n_{ab}\geq 2$ \\
	$n_{b} =2$ and $n_{ab}\geq 1.$
	\end{center}

	It is easily seen that in all cases we have the two triplets.
	
	3) If $n_{a}=0$ then $n_{ab}\geq 2$. Analogously as in 2) we can see that
	one of the following stands:%
	\begin{center}
	$n_{b} =2$ and $n_{ab}\geq 2$ \\
	$n_{b} =1$ and $n_{ab}\geq 3$ \\
	$n_{b} =0$ and $n_{ab}\geq 4$
	\end{center}

	so analogously we conclude that the triplets are easily found. This
	concludes proof of Claim 1.
	
	Let us now prove the original claim, for three vertices, $a$, $b$ and $c$.
	One direction can be easily seen, that is if $a$, $b$ and $c$ are the
	centers of independent triplets, then the inequalities hold. Let us prove
	the converse. First we denote:
		\begin{center}
		$s_{a} =s_{A}\backslash (s_{AB}\cup s_{AC})$ \\
		$s_{b} =s_{B}\backslash (s_{AB}\cup s_{BC})$ \\
		$s_{c} =s_{C}\backslash (s_{AC}\cup s_{BC})$ \\
		$s_{ab} =s_{AB}\backslash s_{ABC}$ \\
		$s_{ac} =s_{AC}\backslash s_{ABC}$ \\
		$s_{bc} =s_{BC}\backslash s_{ABC}.$
	\end{center}

	It holds%
		\begin{center}
		$n_{a} =n_{A}-n_{AB}-n_{AC}+n_{ABC}$ \\
		$n_{b} =n_{B}-n_{AB}-n_{BC}+n_{ABC}$ \\
		$n_{c} =n_{C}-n_{AC}-n_{BC}+n_{ABC}$ \\
		$n_{ab} =n_{AB}-n_{ABC}$ \\
		$n_{ac} =n_{AC}-n_{ABC}$ \\
		$n_{bc} =n_{BC}-n_{ABC}$ \\
		$n_{abc} =n_{ABC},$
	\end{center}

	so the inequalities are equivalent to%
		\begin{center}
			$n_{a}+n_{ac}+n_{ab}+n_{abc} \geq 2$ \\
		$n_{b}+n_{bc}+n_{ab}+n_{abc} \geq 2$ \\
		$n_{c}+n_{ac}+n_{bc}+n_{abc} \geq 2$ \\
		$n_{a}+n_{b}+n_{c}+n_{ab}+n_{bc}+n_{abc} \geq 4$ \\
		$n_{a}+n_{c}+n_{ac}+n_{ab}+n_{bc}+n_{abc} \geq 4$ \\
		$n_{b}+n_{c}+n_{ac}+n_{ab}+n_{bc}+n_{abc} \geq 4$ \\
		$n_{a}+n_{b}+n_{c}+n_{ac}+n_{ab}+n_{bc}+n_{abc} \geq 6.$
	\end{center}

	We will observe three cases, depending on values of $n_{a}$, $n_{b}$ and $%
	n_{c}$.
	
	\textbf{Case 1.} If any of $n_{a}$, $n_{b}$, $n_{c}$ is at least $2$, without the
	loss of generality let us assume $n_{c}\geq 2$, than the claim follows from
	Claim 1. Indeed, since $n_{c}\geq 2$, we can use those two neighbors of $c$
	to form a $c$-centered triplet, and if we denote%
		\begin{center}
		$n_{a}+n_{ac} =n_{a}^{\prime }$ \\
		$n_{b}+n_{bc} =n_{b}^{\prime }$ \\
		$n_{ab}+n_{abc} =(n_{ab})^{\prime },$
	\end{center}

	from the inequalities we have%
	\begin{center}
	$n_{a}^{\prime }+n_{b}^{\prime }+(n_{ab})^{\prime } \geq 4$ \\
	$n_{a}^{\prime }+(n_{ab})^{\prime } \geq 2$ \\
	$n_{b}^{\prime }+(n_{ab})^{\prime } \geq 2,$
	\end{center}

	and Claim 1 can be directly applied.
	
	\textbf{Case 2.} $n_{a}=n_{b}=n_{c}=0$. Than we have $n_{ac}+n_{ab}+n_{bc}+n_{abc}%
	\geq 6$. Let us consider some subcases.
	
	2.1. $n_{abc}\geq 6$. We have at least $6$ vertices adjacent to each of $a$, 
	$b$ and $c$ so obviously in this case we can use $2$ of the vertices for
	each of the triplets.
	
	2.2. $n_{abc}=5$. Now at least one of $n_{ac}$, $n_{ab}$ and $n_{bc}$ must
	be at least $1$. Let us assume $n_{ab}\geq 1$. Then for the $a$-centered
	triplet we use one vertex from $s_{ab}$, and one from $s_{abc}$, and for the
	other two triplets we use $4$ of the remaining vertices from $s_{abc}$.
	
	2.3. $n_{abc}=4$. In this case from the inequality follows that at least
	one of the $n_{ab}$, $n_{bc}$, $n_{ac}$ is at least $2$, or at least two of
	them are at least $1$. If one of them, let us assume $n_{ab}$, is at least $2$%
	, than we can use those $2$ vertices for the $a$-centered triplet, and $4$
	vertices in $s_{abc}$ for $b$ and $c$-centered triplets. And if any two of $%
	n_{ab}$, $n_{ac}$ or $n_{bc}$ are at least $1$, let us assume $n_{ab}$, $%
	n_{ac}\geq 1$, we can use those two vertices for the same triplet, in this
	case it will be $a$-centered triplet, and the rest of the triplets we form
	with vertices from $s_{abc}$.
	
	2.4. $n_{abc}=3$. It follows $n_{ac}+n_{ab}+n_{bc}\geq 3$. Now, if any of $%
	n_{ac}$, $n_{ab}$, $n_{bc}$ is at least $2$, we use those $2$ as in case
	2.3., for one triplet, and the remaining $1$ we have in these three sets
	together with $3$ from $s_{abc}$ we use for other two triplets. If neither
	of $n_{ab}$, $n_{ac}$, $n_{bc}$ is at least $2$, it means $%
	n_{ab},n_{ac},n_{bc}\geq 1$ and we use those three vertices one in each of
	the triplets and then complete the triplets with three vertices from $s_{abc}
	$.
	
	For the remaining cases consideration is very similar to case 2.4. so we
	skip the detailed proof.
	
	\textbf{Case 3.} At least one od $n_{a}$, $n_{b}$, $n_{c}$ is at least $1$ and none
	of them equals $2$. This case includes options of all three of them equaling 
	$1$, two of them equaling $1$ and one of them $0$, and one of them equaling $%
	1$ and two of them equaling $0$.
	
	3.1. $n_{a}=n_{b}=n_{c}=1$. Obviously each of the $a$, $b$ and $c$ are
	missing one more neighbor to form a triplet. From the inequalities we have $%
	n_{ac}+n_{ab}+n_{bc}+n_{abc}\geq 3$.
	
	If $n_{abc}\geq 3$ then we use one of those vertices for each of the
	triplets.
	
	If $n_{abc}=2$ then at least one of $n_{ab}$, $n_{ac}$, $n_{bc}$ is at least 
	$1$, so we use that one to complete one triplet, and the two form $s_{abc}$
	for other two triplets.
	
	If $n_{abc}=1$ then either one of $n_{ac}$, $n_{ab}$, $n_{bc}$ is at least $2
	$, or at least two of $n_{ab}$, $n_{ac}$ or $n_{bc}$ are at least $1$. Let
	us assume $n_{ab}\geq 2$. Then we use those two vertices to complete the
	triplets of $a$ and $b$ and we use vertex from $s_{abc}$ for the $c$%
	-centered triplet. And if two of $n_{ab}$, $n_{ac}$ or $n_{bc}$ are at least $1$, we
	also use is to complete two different triplets and complete the remaining
	triplet with vertex from $s_{abc}$.
	
	If $n_{abc}=0$ then among $s_{ab}$, $s_{ac}$ or $s_{bc}$ we have at least $3$
	vertices and it is important to observe that two of those sets cannot be
	equal to $0$ at the same time. For instance, if $n_{ac}=n_{ab}=0$ and $%
	n_{bc}\geq 3$, then we have a contradiction with $%
	n_{a}+n_{ac}+n_{ab}+n_{abc}\geq 2$, because $n_{a}=1$. So at least two of $%
	n_{ab}$, $n_{ac}$, $n_{bc}$ are at least $1$ and we can complete the
	triplets as before.
	
	3.2. Two of $n_{a}$, $n_{b}$, $n_{c}$ equal $1$ and one of them equals $0$.
	Without the loss of generality let us assume $n_{a}=0$, $n_{b}=n_{c}=1$. We
	now have $n_{ac}+n_{bc}+n_{ab}+n_{abc}\geq 4$. We proceed considering the
	subcases for $n_{abc}\geq 4$, and $n_{abc}\in \{0,1,2,3\}$ and consideration
	is completely analogous as in 3.1.
	
	3.3. Two of $n_{a}$, $n_{b}$, $n_{c}$ equal $0$ and one of them equals $1$.
	Without the loss of generality let us assume $n_{a}=n_{b}=0$, $n_{c}=1$. Now
	from the inequalities it follows $n_{ac}+n_{bc}+n_{ab}+n_{abc}\geq 5$ and
	the rest of the proof again easily follows by considering different values
	of $n_{abc}$.
	
	This completes the proof for conditions for having the three independent
	triplets.
\end{proof}

\begin{remark}\label{obrat}
	In Proposition \ref{prop1} we proved that a graph with given conditions is safely $%
	3$-colorable if it contains three independent triplets. However the converse
	is not true. There exists a graph which is safely $3$-colorable, but doesn't
	have three independent triplets and it can be seen in Figure \ref{fig:2sig}.%
	
	\begin{figure}[h]
		\centering\includegraphics[scale=0.6]{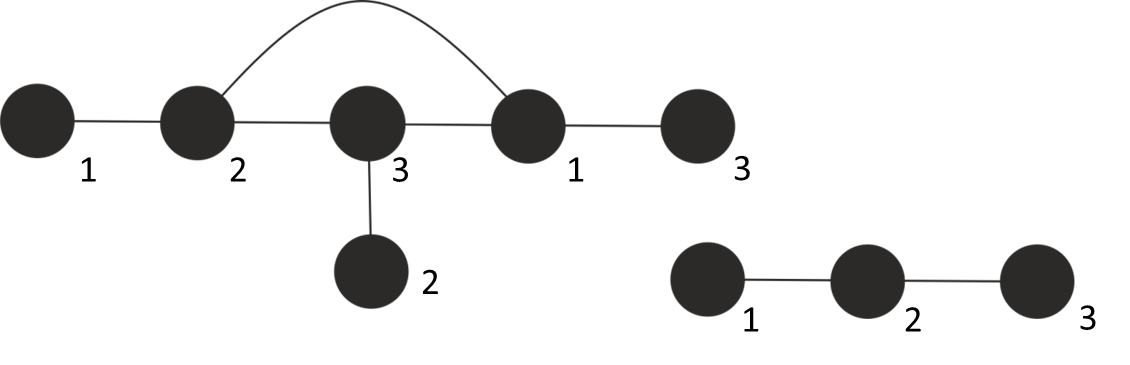}
		\caption{A safely $3$-colored graph that doesn't contain $3$ independent triplets}
		\label{fig:2sig}
	\end{figure}

\end{remark}

\section*{Conclusions and further work}
We have defined a new type of vertex coloring, an $a$-safe $k$-coloring, motivated by a secret sharing scheme and $a$ attackers that are trying to steal the secret. We limited ourselves to the assumption that the secret is divided in $3$ parts and therefore we explored what are the families of graphs with minimal degree $3$ that allow a $2$-safe coloring with $3$ colors. Further work may include exploring families of safely $k$-colorable graphs with $k>3$, or determining minimal number of colors in order to safely color any given graph family.

\section{Acknowledgements}
Partial support of the Croatian Ministry of Science and Education is gratefully acknowledged.

\end{document}